\documentclass[a4, 12pt,leqno]{amsart}
\usepackage{lmodern}
\usepackage[english]{babel}
\usepackage{microtype}
\usepackage{mathtools}
\usepackage{mathabx}
\usepackage{bbm}
\usepackage{amsthm}
\usepackage{mathtools}
\usepackage{breakcites}

\usepackage{accents}
\usepackage{float}
\usepackage{physics}
\usepackage{a4wide}
\usepackage{algorithmic}
\usepackage[ruled]{algorithm2e}
\usepackage[active]{srcltx}
\usepackage[utf8]{inputenc}
\usepackage[T1]{fontenc}
\usepackage{lipsum}
\usepackage{upgreek}
\usepackage{listings}
\usepackage{wasysym}
\DeclareMathAlphabet{\mathdutchcal}{U}{dutchcal}{m}{n}
\SetMathAlphabet{\mathdutchcal}{bold}{U}{dutchcal}{b}{n}
\DeclareMathAlphabet{\mathdutchbcal}{U}{dutchcal}{b}{n}
\DeclareSymbolFont{myletters}{OML}{ztmcm}{m}{it}
\DeclareMathSymbol{\nicelambda}{\mathord}{myletters}{"15}
\usepackage{amsmath,amssymb,amscd,amsthm,graphicx,enumerate}
\usepackage{mathrsfs}
\usepackage{amssymb,amsthm,a4wide}
\usepackage{nicefrac}
\newcounter{example}

		\makeatletter
	\def\@xnamedef#1{\expandafter\protected@xdef\csname #1\endcsname}
	\def\no@harm{} 
	\def\ead@au#1{\protected@edef\@ead@au{#1}}
	\patchcmd\runningauthor@fmt{\global\edef}{\protected@xdef}{}{}
	\patchcmd\runningauthor@fmt{\global\edef}{\protected@xdef}{}{}
	\patchcmd\author@fmt{\edef}{\protected@edef}{}{}
	\patchcmd\add@xtok{\xdef}{\protected@xdef}{}{}
	\makeatother

\def\bysame{\leavevmode\hbox to3em{\hrulefill}\thinspace}
\usepackage{pifont}

\usepackage{color}
\usepackage[usenames,dvipsnames]{xcolor}
\usepackage{xcolor}
\definecolor{ultrablue}{rgb}{0.0,0.0, 1}
\definecolor{jigari}{rgb}{0.39,0.0, 0.0}
\usepackage{url}
\usepackage{hyperref}
\hypersetup{
	breaklinks=true,
	linktoc=page,
	colorlinks,
	linkcolor={jigari},
	citecolor={jigari},
	urlcolor={jigari}
}
\hypersetup{breaklinks=true}
\allowdisplaybreaks

\usepackage{tikz}
\usetikzlibrary{matrix}
\usepackage[all]{xy}
\usepackage{subfig}
\makeatletter 
\def\l@subsection{\@tocline{2}{0pt}{3pc}{6pc}{}}
\makeatother

\usepackage{enumerate}
\usepackage{soul}
\usepackage{graphicx}
\usepackage{tikz}
\usepackage{tikzpagenodes}
\usepackage{scrlayer-scrpage}
\usepackage{tikz}
\usetikzlibrary{calc,spy}
\usetikzlibrary{shapes.geometric, arrows}
\usepackage{pgfplots}
\usetikzlibrary{intersections, pgfplots.fillbetween}

\makeatletter
\@namedef{subjclassname@2020}{%
	\textup{2020} Mathematics Subject Classification}
\makeatother
\makeatletter 
\def\l@subsection{\@tocline{2}{0pt}{3pc}{6pc}{}}
 \makeatother
\usepackage[a4paper,bindingoffset=0in,%
left=1.2in,right=1.2in,top=1.5in,bottom=1.6 in,%
footskip=1mm]{geometry}
\theoremstyle{plain}
\newtheorem*{theorem*}{Theorem}
\newtheorem{lemma}{Lemma}[section]
\newtheorem{theorem}[lemma]{Theorem}

\theoremstyle{definition}
\newtheorem{definition}[lemma]{Definition}
\newtheorem{remark}[lemma]{Remark}

\theoremstyle{remark}

\numberwithin{equation}{section}

\newcommand{\R}{\mathbb R}

\DeclareMathOperator{\Ric}{Ric}
\DeclareMathOperator{\scal}{scal}
\newcommand{\m}{\mathdutchcal{m}}

\newcommand{\B}{\mathcal{B}}

\DeclareMathOperator{\dvol}{dvol}
\DeclareMathOperator{\Lip}{Lip}

\newcommand{\dist}{\mathsf{d}}
\newcommand{\dif}{\mathrm{d}}

\makeatletter
\newcommand{\tpmod}[1]{{\@displayfalse\pmod{#1}}}
\makeatother

\newcount\stylenum  \newdimen\styledim 
\def\varstyle#1{\mathchoice{\stylenum=0 #1}{\stylenum=1 #1}{\stylenum=2 #1}{\stylenum=3 #1}}
\def\mathaxis{\fontdimen22\ifcase\stylenum 
	\textfont\or\textfont\or\scriptfont\or\scriptscriptfont\fi2 }
\def\setstyledim{\styledim=\ifcase\stylenum .1em\or.1em\or.07em\or.05em\fi\relax}

\def\sqdot{\mathbin{\varstyle{\raise\mathaxis\hbox{\setstyledim
				\kern\styledim 
				\vrule width1.2\styledim height.6\styledim depth.6\styledim
				\kern\styledim}}}}

\newcommand{\restr}{\raisebox{-.1908ex}{$\big|$}}

\makeatletter
\newcommand{\tpitchfork}{%
	\vbox{
		\baselineskip\z@skip
		\lineskip-.52ex
		\lineskiplimit\maxdimen
		\m@th
		\ialign{##\crcr\hidewidth\smash{$-$}\hidewidth\crcr$\pitchfork$\crcr}
	}%
}
\makeatother

\newcommand{\ident}{\raisebox{0pt}{\scalebox{1.1}{$\mathbbm{1}$}}\hspace{-1pt}}

\usepackage{stackengine,scalerel}

\newcommand\overstar[1]{\ThisStyle{\ensurestackMath{%
			\setbox0=\hbox{$\SavedStyle#1$}%
			\stackengine{0pt}{\copy0}{\kern0\ht0\smash{\SavedStyle\star}}{O}{c}{F}{T}{S}}}}

\newcommand{\cost}{\mathrm{c}}

\newcommand{\Del}{{\Updelta}}

\newcommand{\Id}{\mathrm{Id}}

\DeclareMathOperator{\Haus}{\mathcal{H}}

\DeclareMathOperator{\vol}{vol}

\newcommand{\bal}{\mathdutchcal{b}}
\newcommand{\s}{\mathdutchcal{s}}

\newcommand{\Sph}{\mathbb{S}}

\newcommand{\Tang}{\mathbb{T}}
\newcommand{\eps}{\varepsilon}
\newcommand\smallO{
	\mathchoice
	{{\scriptstyle\mathcal{O}}}
	{{\scriptstyle\mathcal{O}}}
	{{\scriptscriptstyle\mathcal{O}}}
	{\scalebox{.7}{$\scriptscriptstyle\mathcal{O}$}}
}

\begin{document}
\title[\scriptsize {On weak formulations of (super) Ricci flows}]{\small On weak formulations of (super) Ricci flows} 

%

\author[\protect \scriptsize S. Lakzian]{Sajjad Lakzian$^\dagger$}
\address{\noindent -- Sajjad Lakzian \newline \noindent Department of Mathematical Sciences\newline Isfahan University of  Technology (IUT) \newline Isfahan 8415683111, Iran.}
\email{\href{mailto:slakzian@iut.ac.ir}{slakzian@iut.ac.ir}}
\subjclass[2020]{Primary: 53E20, 53C21; Secondary: 53C23, 49Q22}
\keywords{weak Ricci flow, supper Ricci flow, scalar curvature lower bounds, contraction semigroups, conjugate heat flow}
\thanks{$^\dagger$The author is supported by the INSF Grant No. 4030556, awarded by the ``On the Frontiers of Mathematical Sciences'' program.}
\maketitle
%
\begin{center}
\emph{In memory of Richard Hamilton, a brilliant visionary.} 
\end{center}
\begin{abstract}
\textsl{We present two characterizations of smooth compact Ricci flow solutions solely in terms of metrics and measures (one of them only works under positive scalar curvature along the flow); thus, provide weak formulations that are generalized to the singular setting in a straightforward manner. These formulations are achieved by weakly formulating super Ricci flows and imposing a saturation condition (solely in terms of metric and measure) to ensure the super Ricci flow inequality is an equality.
} 
\end{abstract}
\date{\today}
\section{Prelude}
\par Indisputably, the concept of the Ricci flow (see its backward form in~\eqref{EQN:bRF}) which can only be conceived in a genius and iconic mind like Richard Hamilton's, is one of the most useful and powerful tools in geometric analysis. Ricci flow began as a vision to tackle Poincare's conjecture (Hamilton~\cite{H1,H2,H3}) and sure enough it succeeded (Perelman~\cite{G1,G2,G3}); but by that time, it had morphed into something more than just a vision or a tool; it became a god given gift to Mathematics; a gift that keeps giving to this day. 
\par Unlike the mean curvature flow, the Ricci flow does not evolve in an ambient space; in other words, the Ricci flow is an intrinsic flow of the space. This significantly contributes  to the difficulty one faces in trying to weakly formulate it. To clarify, here, a weak formulation means one in terms of the metrics (distance functions) and measures. In other words, the quest is to formulate the Ricci flow in a way that it could be used for less regular spaces where classical differentiation in space variables is no longer an option.
\par The goal of the following notes is to provide two (very interrelated) such characterizations. We will present weak formulations for both Ricci flows and super Ricci flows on compact manifolds. The constructions utilized are sufficiently general to define analogs of the Ricci flow on metric measure spaces.
\par The main idea behind our notion of weak Ricci flow is quite simple: first, we weakly formulate super solutions to the Ricci flow (which are called super Ricci flows) and then, we impose a metric and measure condition that ensures the defining inequality of the super solution to the Ricci flow is saturated.
\par Under this exact name, super Ricci flows were first introduced in McCann-Topping~\cite{MT}. However, the coupled contraction that forms the basis of its definition can also be found in the concurrent paper Arnaudon-Coulibaly-Thalmaier~\cite{ACT}. Super Ricci flows are abundant, but the first interesting non-trivial and non-Riemannian example was presented in Lakzian-Munn~\cite{LM1}. Beyond Lakzian-Munn~\cite{LM1}, in the context of metric measure spaces, the notion of non-collapsed super Ricci flows was further developed and studied in Sturm~\cite{St1}. A generalized notion of super Ricci flows for time dependent metric-measure spaces with possible collapsed singularities (such as neckpinch singularities) and using a general convex cost function $\cost$ was presented in Lakzian-Munn~\cite{LM2}. The latter is based on the coupled contraction for dynamic diffusions and uses the concept of dynamic diffusions developed in Kopfer-Sturm~\cite{KS}. In this direction, also the recent work Bamler~\cite{B} provides a compactness theory for super Ricci flows which is of foundational importance in studying limit Ricci flows. We also note the interesting recent work Li~\cite{Li} in which the author has introduced and studied $(K,n,N)$-super Ricci flows in the setting of singular Bakry-\'Emery spaces and generalized some key results (such as the differential Harnack inequality) from the smooth Ricci flow theory to the said flows.  
\par We should mention that our approach to formulating weak (super) Ricci flow in these notes does not rely on the time-dependent theory developed in Sturm~\cite{St1}. Instead, is closely aligned with the one in Lakzian-Munn~\cite{LM1}. Other important and significantly different weak solutions of the Ricci flow can be found in Haslhofer-Naber~\cite{HN} and Kleiner-Lott~\cite{KL,KL2}. 

\subsection*{\bf Conventions/Notations:}
\renewcommand{\labelitemi}{\raisebox{2pt}{\scalebox{1.2}{$\centerdot$}}}
\begin{itemize}
	\item \textit{The flows we consider are all defined on compact intervals; we will not specify these intervals in most cases where they are not of essential importance.}
	\medskip
	\item \textit{Forward time parameters are mostly denoted by $t$ and $s$ and backward time by $\tau$ or $\sigma$. They are related via $\tau = T - t$ for some chosen reference time $T$. These will not be further specified.} 
	\medskip
	\item \textit{Throughout, the spaces we consider (manifolds, metric spaces and pseudo metric spaces) are assumed to be compact without further mentioning this.}
	\medskip
	\item \textit{The term ``static'' refers to the case where the generator is time-independent as opposed to ``dynamic'' which means time-dependent.}
	\medskip
	\item \textit{The results regarding heat flows are written in forward time, while the results concerning the conjugate heat flows are presented in backward time. $\overrightarrow{\mathcal{X}}$ and $\overleftarrow{\mathcal{X}}$ will denote forward and backward (resp.) time-dependent metric measure spaces.} 
	\medskip
	\item \textit{The obvious common sense conventions are in order. For example when $r=0$, the averaging operators are identity.} 
	\medskip
	\item \textit{Whenever we are discussing a one-parameter family of Riemannian metrics, the metrics are smooth and the dependence on the time parameter is at least $\mathcal{C}^1$.} 
	\medskip 
	\item \textit{Whenever an element is placed in front of an operator, we mean it is defined i.e. is within the domain of that operator without further mentioning the domain.}  
\end{itemize}

\addtocontents{toc}{\protect\setcounter{tocdepth}{-1}}
\section*{\small \bf  Acknowledgments}
\addtocontents{toc}{\protect\setcounter{tocdepth}{1}}

\vspace{-100pt}

\begin{minipage}[c][7cm][b]{0.87\textwidth}

	\renewcommand{\labelitemi}{\raisebox{1pt}{\scalebox{0.6}{\ding{169}}}}
\begin{itemize}
		\item \textit{The author would like to express his gratitude for the anonymous referee's valuable comments that immensely helped improving this manuscript, and for bringing to his attention the reference~Li~\cite{Li}};
		\medskip
		\item \textit{The author and this project are supported by the INSF Grant No. 4030556, awarded by the ``On the Frontiers of Mathematical Sciences'' program.} 
	\end{itemize}
\end{minipage}
\normalsize

\section{Preliminaries}
\subsection{Smooth super Ricci flows}
\par Super Ricci flows first appeared in McCann-Topping~\cite{MT} and were tied to the contraction properties in optimal transportation. The equation of Hamilton's Ricci flow -- parameterized in backward time (or backward Ricci flow) -- is
\begin{equation}
	\label{EQN:bRF}
	\frac{\partial g}{\partial \tau} = 2\Ric(g(\tau)). 
\end{equation}
\par A subsolution of (\ref{EQN:bRF}) (or equivalently a supersolution of the Ricci flow equation) is called a super Ricci flow. Therefore, it can be expressed as
\[
\frac{\partial g}{\partial \tau} \le 2 \Ric (g(\tau)).
\]
\par The dynamic heat flow of measures $\mu(\tau)$ corresponds to the solutions  (in the distributional sense) of 
\[
\frac{\partial \upmu(\tau)}{ \partial \tau } = \Updelta_{g(\tau)}  \upmu(\tau).
\]
In particular, by standard arguments, the smooth solutions (under smooth Ricci flow) are given by smooth $n$-form $\omega(\tau)$, the density function of which, satisfies the conjugate heat equation McCann-Topping~\cite{MT},
\begin{align*}
	\frac{\partial u}{ \partial \tau } =  \Updelta_{g(\tau)} u - \scal_{g(\tau)}u.
\end{align*}
\par It is verified in McCann-Topping~\cite{MT} that a smooth one-parameter family of metrics is a super Ricci flow if and only if the $\mathcal{L}^2$-Wasserstein distance between two diffusions is non-increasing in $\tau$; namely, satisfying the so-called dynamic coupled contraction property is equivalent to being a super Ricci flow. The same is true using the $\mathcal{L}^1$-Wasserstein distance instead Topping~\cite{Top2}. In McCann-Topping~\cite{MT}, it is also shown that super Ricci flow is equivalent to non-decreasing best Lipschitz constants of dynamic backward heat solutions.
\par The dynamic coupled contraction for general cost functions (cost functions which are increasing in distance) had been previously shown to hold under the super solutions
\[
\frac{\partial g}{\partial \tau} \le 2 \Ric (g(\tau)) + (\nabla^{\sf sym} \mathscr{X}_\tau)^\flat,
\]
for in-homogeneous diffusions generated by -- $\mathcal{C}^1$ in $\tau$ --  time-dependent diffusion operators $\Del_\tau + \mathscr{X}_\tau$; here, $\mathscr{X}_\tau$ is a smooth vector field Arnaudon-Coulibaly-Thalmaier~\cite[Theorem 4.1]{ACT}. It is important to note that the proof in Arnaudon-Coulibaly-Thalmaier~\cite{ACT} does not provide the equivalence.
\par The characterization of smooth super Ricci flow in terms of coupled contraction for dynamic diffusions lead to the following straightforward generalization of the concept of super Ricci flow (still in the smooth realm). 
\begin{definition}
\par Let $\cost = \left\{\cost_\tau\right\}$ be a one-parameter family of cost functions. $g(\tau)$ is called a $\cost$-\textsf{SRF} whenever for any two (smooth) dynamic diffusions $\upmu_1(\tau)$ and $\upmu_2(\tau)$, the optimal total cost
	\[
	\mathcal{T}_{\cost_\tau} \left(\upmu_1(\tau), \upmu_2(\tau)  \right),
	\]
	is non-increasing in $\tau$. 
\end{definition}
\subsection{Weak super Ricci flows via dynamic convexity of entropy}
\par A theory of ($\mathcal{N}$-) super Ricci flows for time-dependent metric measure spaces has recently been developed in Sturm~\cite{St1}. According to Sturm~\cite{St1}, an ($\mathcal{N}$-) super Ricci flow is characterized by a time-dependent metric measure space (with fix underlying set), on which, the Boltzmann entropy is strongly dynamically ($\mathcal{N}$-) convex; see Sturm~\cite{St1} for further details. 
\subsubsection{\small \bf \textsf{\textit{The non-collapsed $\mathcal{L}^2$ - super Ricci flows}}}
\par Much in the same spirit as McCann-Topping~\cite{MT} in the Riemannian setting, Kopfer-Sturm~\cite{KS} explores the  interplay between the dynamical convexity of the Boltzmann entropy and the coupled contraction property of the $\mathcal{L}^2$-Wasserstein distances for diffusions. It is important to emphasize the significance of the time-dependent methods developed in Kopfer-Sturm~\cite{KS} which provide a general framework for working with time-dependent Dirichlet energy, time-dependent entropy, and its gradient flow. For a more comprehensive bibliography on the time-dependent Dirichlet spaces prior to this, see Lakzian-Munn~\cite{LM2}.
\par Kopfer-Sturm~\cite{KS} introduces a theory of what we will refer to as $\mathcal{L}^2$-weak super Ricci flow. Dynamic coupled contraction for ($\mathcal{N}$-) super Ricci flows has been demonstrated under rather strong regularity assumptions on the time slices such as curvature bounds which may not hold for flows that are going through non-global singularities like neckpinches. 
\par Following Kopfer-Sturm~\cite{KS}, let $\left( X, \dist_t, \m_t  \right)$ be a family of Polish compact metric measure spaces where the Borel measures $\m_\tau$ at different times are mutually absolutely continuous with logarithmic densities (with respect to a fixed measure) that are Lipschitz in time. Additionally, suppose the distances satisfy the non-collapsing property
\[
\left|  \ln \frac{\dist_t(x,y)}{\dist_s(x,y)}  \right| \le {\sf L} |t-s|,
\]
which is referred to as the ``log-Lipschitz'' condition. 
\par Given a time dependent family of Dirichlet forms whose square field operators satisfy the diffusion property and uniform ellipticity with respect to a given background strongly local regular Dirichlet form, and if the square field operators applied to logarithmic densities are uniformly bounded, solutions to the dynamic heat and the dynamic conjugate heat equations (defined weakly in terms of the Dirichlet form) exist and are unique; see Kopfer-Sturm~\cite{KS} for further details.
\par A drawback of the otherwise immaculate and successful approach presented in Kopfer-Sturm~\cite{KS} is the less desirable requirement of mutual absolutely continuous measures with Lipschitz densities as well as the Log-Lipschitz condition that one must impose for things to work. These conditions automatically prohibits neckpinches and other non-global singularities from occurring. 
\par As we will see in ~\textsection\thinspace\ref{sec:wrf}, our definition of weak (super) Ricci flow only utilizes upper and lower derivatives in $t$ of the distance $\dist_t$ so does not require $\ln \dist^2_t$ to be Lipschitz. In our approach, $\dist_t$ are even allowed be pseudo distances.
\begin{remark}
\par For time-dependent spaces equipped with time-dependent Dirichlet forms, one can consider $\cost$-\textsf{WSRF}s as those flows where the optimal total cost of transporting between two dynamic diffusions (w.r.t. the cost function $\cost_\tau$) does not increase along the flow where a dynamic diffusion is taken to be in the sense of Kopfer-Sturm~\cite{KS}. 
	\par However, we will not study such flows in these notes. Instead, we will discuss $\cost$-\textsf{WSRF}s defined using the new construction of dynamic diffusions in these notes. 
\end{remark}
\subsection{Trotter-Kato and Chernoff type limits}\label{subsec:Trotter-Chernoff}
\par Two main tools (intimately related) in using semigroups effectively are approximation criteria and perturbation product formulas.  Approximation theorems tell us when semigroups converge to a limit semigroup given the convergence of their generators, and perturbation product formulas tell us how to multiply two semigroups to get a semigroup that is generated by the sum of the generators of the two original semigroups. 
\par A classical Trotter approximation theorem in the form of a Chernoff product formula (which stems from the Trotter-Kato theory) is the following. 
\begin{theorem}[Trotter-Chernoff product formula Pazy~{\cite[Chapter III, Corollary 5.4]{Paz}}]\label{thm:stc}
\par Let $\mathbf{B}$ be a Banach space.. Suppose $\mathscr{F}: [0,\infty) \to \mathcal{L}(\B)$ with $\mathscr{F}(0) = \ident$ that has exponential growth
	\[
	\|\mathscr{F}(\uprho)\| \le e^{\omega \rho},
	\]
for some $\omega \ge 0$. Suppose $\mathscr{F}'(0)v = \mathcal{A}v$ for every $v \in \mathsf{Dom}(\mathcal{A})$ where $\mathcal{A}$ is the infinitesimal generator of a $\mathcal{C}^0$-semigroup $T(\tau)$ on $\mathbf{B}$. Then, 
\[
T(\tau)x = \lim_{n \to \infty} \mathscr{F}\left(\frac{\tau}{n}\right)^n x,
\]
holds locally uniformly in $\tau$. 
\end{theorem}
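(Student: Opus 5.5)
The plan is the standard Chernoff argument: compare $\mathscr{F}(\tau/n)^n$ with $\exp\bigl(n(\mathscr{F}(\tau/n)-\ident)\bigr)$, then pass from that exponential to $T(\tau)$ via the Trotter--Kato approximation theorem.

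First, renormalize to the contractive case. Set $\tilde{\mathscr{F}}(\uprho)=e^{-\omega\uprho}\mathscr{F}(\uprho)$ and $\tilde T(\tau)=e^{-\omega\tau}T(\tau)$. Then $\|\tilde{\mathscr{F}}(\uprho)\|\le 1$, $\tilde{\mathscr{F}}'(0)=\mathcal{A}-\omega$ on $\mathsf{Dom}(\mathcal{A})$, and $\tilde T$ is generated by $\mathcal{A}-\omega$. Moreover $e^{\omega\tau}$ is exactly recovered from $(e^{\omega\tau/n})^n$. So we may assume $\omega=0$.

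For $h>0$ define the bounded operator $A_h=h^{-1}(\mathscr{F}(h)-\ident)$. Since $\|\mathscr{F}(h)\|\le 1$, the semigroup $e^{tA_h}=e^{-t/h}e^{(t/h)\mathscr{F}(h)}$ is a contraction semigroup. By hypothesis, $A_hv\to\mathcal{A}v$ for every $v\in\mathsf{Dom}(\mathcal{A})$.

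\textbf{Step 1 (the Chernoff estimate).} For a contraction $C$ and any $n$, one has
\[
\|C^nx-e^{n(C-\ident)}x\|\le\sqrt{n}\,\|(C-\ident)x\|.
\]
To prove it, write
\[
e^{n(C-\ident)}-C^n=e^{-n}\sum_k \frac{n^k}{k!}(C^k-C^n),
\]
bound $\|C^kx-C^nx\|\le|n-k|\,\|(C-\ident)x\|$, and note that $\sum_k e^{-n}\frac{n^k}{k!}|n-k|\le\sqrt{n}$ by Cauchy--Schwarz, since the Poisson variance is $n$. Apply this with $C=\mathscr{F}(\tau/n)$ and $x\in\mathsf{Dom}(\mathcal{A})$. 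Then $\|(C-\ident)x\|=(\tau/n)\|A_{\tau/n}x\|=O(\tau/n)$, so the difference is $O(\tau/\sqrt n)\to0$, uniformly for $\tau$ in compact sets.

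\textbf{Step 2 (Trotter--Kato).} Show that $e^{\tau A_h}x\to T(\tau)x$ as $h\to0$, locally uniformly in $\tau$. I would use the version of the Trotter--Kato theorem whose hypotheses are contraction semigroups, strong convergence of the generators on a core (here $\mathsf{Dom}(\mathcal{A})$ itself), and the limit $\mathcal{A}$ already known to be a generator. A direct route is the Duhamel formula
\[
T(\tau)x-e^{\tau A_h}x=\int_0^\tau e^{(\tau-s)A_h}(\mathcal{A}-A_h)T(s)x\,ds
\]
for $x\in\mathsf{Dom}(\mathcal{A})$. Since $T(s)x$ traces a compact subset of $\mathsf{Dom}(\mathcal{A})$ in the graph norm, pointwise convergence of $A_h$ upgrades to uniform convergence on it. For this upgrade I need the family $A_h$ to be uniformly bounded in graph norm, i.e.\ $\|A_hv\|\le K(\|v\|+\|\mathcal{A}v\|)$ on $\mathsf{Dom}(\mathcal{A})$. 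I would obtain this from the uniform boundedness principle applied to the Banach space $(\mathsf{Dom}(\mathcal{A}),\|\cdot\|_{\mathcal{A}})$. This is the step I expect to be the main obstacle, since the hypothesis $\mathscr{F}'(0)v=\mathcal{A}v$ is only pointwise. The cleanest fix is to invoke Trotter--Kato through resolvents: the convergence $(\lambda-A_h)^{-1}y\to(\lambda-\mathcal{A})^{-1}y$ follows for $y$ in the dense set $(\lambda-\mathcal{A})\mathsf{Dom}(\mathcal{A})$, which is all of $\mathbf{B}$, using $\|(\lambda-A_h)^{-1}\|\le1/\lambda$.

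\textbf{Step 3 (density and conclusion).} With $h=\tau/n$, combine Steps 1 and 2 to get $\mathscr{F}(\tau/n)^nx\to T(\tau)x$ for all $x\in\mathsf{Dom}(\mathcal{A})$, locally uniformly in $\tau$. Step 2 requires some care here because $h$ is tied to $\tau$: I would use uniformity of the Trotter--Kato convergence over $(h,t)$ with $t\le\tau_0$, which the resolvent proof gives. All operators involved have norm at most $1$ (after renormalization), so an $\varepsilon/3$ argument extends the convergence from the dense set $\mathsf{Dom}(\mathcal{A})$ to every $x\in\mathbf{B}$, again locally uniformly in $\tau$. Undoing the rescaling by $e^{\omega\tau}$ gives the statement.
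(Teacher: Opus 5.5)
Your proposal is correct. It is essentially the standard Chernoff argument behind the result that the paper simply cites from Pazy without proof: the $\sqrt{n}$-estimate comparing $C^n$ with $e^{n(C-\ident)}$, a Trotter--Kato step giving $e^{\tau A_h}\to T(\tau)$ uniformly in $(h,\tau)$ (which handles $h=\tau/n$ being tied to $\tau$), and a density argument. The step you flagged as the main obstacle is in fact not one. For any sequence $h_k\downarrow 0$, the convergent sequence $A_{h_k}v$ is bounded for each $v$, so the uniform boundedness principle on $(\mathsf{Dom}(\mathcal{A}),\|\cdot\|_{\mathcal{A}})$ applies, and your Duhamel route closes just as well as the resolvent route.
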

\par Various forms of time-dependent analogues of the above Trotter-Chernoff product formula have been worked in the literature. A rather all-encompassing version has appeared in Vuillermot~\cite{V}.
\begin{theorem}[Time dependent Trotter-Chernoff product formula Vuillermot~{\cite{V}}]\label{thm:Vuil}
\par Let $\mathbf{B}$ be a complex Banach space. Suppose $\mathscr{F}_\tau: [0,\infty) \to \mathcal{L}(\mathbf{B})$ satisfies $\mathscr{F}(0) = \ident$ and has (uniform in $\tau$) exponential growth
	\[
	\|\mathscr{F}_\tau(\uprho)\| \le e^{\omega \rho},
	\]
	for some $\omega \ge 0$.
\par Consider the derivative $\mathscr{F}_\tau'(0)$ which is an operator with the domain $\mathsf{D}$ consisting of all $f \in \mathcal{L}^\infty(M)$ for which, the strong limit 
\[
\mathscr{F}_\tau'(0)f := \lim_{s \downarrow 0} \frac{\mathscr{F}_\tau(s)f - \mathscr{F}_\tau(0)f}{s},
\]	
exists for all $\tau \in [\tau_1,\tau_2]$. 	
\par Suppose there exists a dense $\mathbf{D} \subset \mathbf{B}$ and an evolution system $U(\tau, \sigma)_{0 \le \sigma \le \tau \le \Uplambda}$ such that for every $v \in \mathbf{D}$, $U(\tau,\sigma)v$ is $\mathcal{C}^1$ in $\tau$ and $U(\tau,\sigma)v \in \mathsf{Dom}\left( \mathscr{F}_\tau'(0) \right)$ and 
\[
\lim_{\tau \downarrow 0} \sup_{\tau \in (\sigma, \Uplambda) } \left\| \left(\frac{\mathscr{F}_\tau(\rho)	U(\tau,\sigma) - U(\tau,\sigma)}{\rho} - \mathscr{F}_\tau'(0)U(\tau,\sigma)\right)v \right\|  = 0;
\]
in particular, the above is satisfied if $\mathscr{F}_\tau(\rho)$ is uniformly in $\tau$ differentiable at $\rho = 0$. 
Then,
\[
U(\tau,\sigma) = \lim_{m \to \infty} \prod_{\ell = m-1}^{0} \mathscr{F}_{\sigma + \ell \left(\frac{\tau - \sigma}{m}\right)}\left( \frac{\tau - \sigma}{m} \right).
\]
\end{theorem}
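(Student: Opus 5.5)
The plan is the classical Chernoff argument, run with a moving base point. Throughout, fix $\sigma$, put $h = (\tau-\sigma)/m$ and $\tau_\ell = \sigma + \ell h$, and write
\[
P_m := \prod_{\ell=m-1}^{0} \mathscr{F}_{\tau_\ell}(h).
\]
The aim is to show $P_m v \to U(\tau,\sigma)v$ for $v$ in the dense set $\mathbf{D}$, and then pass to all of $\mathbf{B}$ by density. The density step uses uniform bounds on both sides. On the left, the exponential growth gives $\|P_m\| \le e^{\omega m h} = e^{\omega(\tau-\sigma)}$, uniformly in $m$. On the right, the evolution system satisfies $\|U(\tau,\sigma)\|\le M e^{\omega'(\tau-\sigma)}$ on the compact interval $[0,\Uplambda]$. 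An $\varepsilon/3$ argument then transfers convergence from $\mathbf{D}$ to $\mathbf{B}$.

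For $v\in\mathbf{D}$ I would telescope. Write
\[
U(\tau,\sigma) = \prod_{\ell=m-1}^{0} U(\tau_{\ell+1},\tau_\ell),
\]
so that
\[
P_m v - U(\tau,\sigma)v = \sum_{k=0}^{m-1} \Big(\prod_{\ell=m-1}^{k+1}\mathscr{F}_{\tau_\ell}(h)\Big)\big(\mathscr{F}_{\tau_k}(h) - U(\tau_{k+1},\tau_k)\big)U(\tau_k,\sigma)v.
\]
The leading products are bounded by $e^{\omega(\tau-\sigma)}$. It therefore suffices to show that
\[
\sup_k \big\|\big(\mathscr{F}_{\tau_k}(h) - U(\tau_{k+1},\tau_k)\big)U(\tau_k,\sigma)v\big\| = o(h),
\]
since $m\cdot o(h) = (\tau-\sigma)\, o(h)/h \to 0$.

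To get this local estimate, set $w_k = U(\tau_k,\sigma)v$ and split the difference as
\[
\big(\mathscr{F}_{\tau_k}(h)w_k - w_k - h\,\mathscr{F}'_{\tau_k}(0)w_k\big) - \big(U(\tau_{k+1},\sigma)v - U(\tau_k,\sigma)v - h\,\mathscr{F}'_{\tau_k}(0)w_k\big).
\]
The first bracket is $o(h)$ uniformly in $k$, by exactly the hypothesis in the statement, which is uniform over $\tau\in(\sigma,\Uplambda)$. For the second bracket, the $\mathcal{C}^1$ dependence of $U(\cdot,\sigma)v$ gives
\[
U(\tau_{k+1},\sigma)v - U(\tau_k,\sigma)v = h\,\partial_\tau U(\tau_k,\sigma)v + o(h),
\]
uniformly in $k$, because a $\mathcal{C}^1$ function on a compact interval has a uniformly continuous derivative.

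The main obstacle is identifying $\partial_\tau U(\tau,\sigma)v$ with $\mathscr{F}'_\tau(0)U(\tau,\sigma)v$, i.e. knowing that the generator of the evolution system at time $\tau$ agrees with $\mathscr{F}'_\tau(0)$ on the orbits of $\mathbf{D}$. I would read this as part of what it means for $U$ to be the evolution system associated with $\mathscr{F}'_\tau(0)$, namely that it solves $\partial_\tau u = \mathscr{F}'_\tau(0)u$ on $\mathbf{D}$; this is the setting of Vuillermot~\cite{V}. If it is not built into the hypotheses, I would instead try to derive it from the uniform differentiability hypothesis together with the Trotter–Chernoff product formula (Theorem~\ref{thm:stc}) applied on short intervals with the generator frozen. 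This requires some continuity of $\tau\mapsto\mathscr{F}'_\tau(0)$, and that is where I expect the real difficulty.

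Once this identification is in place, both brackets are $o(h)$ uniformly in $k$, and the telescoping bound finishes the argument for $v\in\mathbf{D}$. Density then extends the limit to all of $\mathbf{B}$. The limit is locally uniform in $\tau$ because every estimate above was uniform on the compact range $[\sigma,\Uplambda]$.
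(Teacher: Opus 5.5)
The paper does not prove this statement; it only cites Vuillermot~\cite{V}. Your argument is the standard Lax-type ``stability plus consistency'' telescoping, which is how time-dependent Chernoff formulas of this kind are proved, and it is correct. The three ingredients are:
\begin{itemize}
\item stability, from the bound $\|\prod_{\ell=m-1}^{k+1}\mathscr{F}_{\tau_\ell}(h)\|\le e^{\omega(\tau-\sigma)}$;
\item consistency, from the uniform hypothesis (correctly read with $\lim_{\rho\downarrow 0}$, as you did) together with uniform continuity of $\partial_\tau U(\cdot,\sigma)v$;
\item a final density argument.
\end{itemize}
Two remarks follow.

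\textbf{The identity cannot be derived, so drop the fallback.} Your primary resolution of the ``main obstacle'' is the right one, but the fallback is not available. The identity $\partial_\tau U(\tau,\sigma)v=\mathscr{F}'_\tau(0)U(\tau,\sigma)v$ does not follow from the hypotheses as printed, because without it the statement is false. Take $\mathscr{F}_\tau(\rho)\equiv\ident$ and $U(\tau,\sigma)=e^{-(\tau-\sigma)}\ident$ on any nonzero $\mathbf{B}$ (for instance $\mathcal{L}^\infty(M)$). Every listed hypothesis holds: $\omega=0$, $\mathscr{F}'_\tau(0)=0$ on all of $\mathbf{B}$, $U(\cdot,\sigma)v$ is smooth, and the consistency quotient is identically zero. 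Yet the product is $\ident\neq U(\tau,\sigma)$.

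So this identity has to be an assumption: $U$ is the evolution system generated by $\mathscr{F}'_\tau(0)$. That is Vuillermot's setting, and it holds in the paper's applications, where $U$ is the heat or conjugate heat propagator. Once it is assumed, the continuity of $\tau\mapsto\mathscr{F}'_\tau(0)U(\tau,\sigma)v$ you were worried about comes free from the $\mathcal{C}^1$ hypothesis. No frozen-generator Chernoff argument is needed.

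\textbf{The endpoint term.} The consistency supremum is taken over the open interval $(\sigma,\Uplambda)$, so it does not cover $\tau_0=\sigma$. This is harmless. The $k=0$ term is a single summand, so it only needs to be $o(1)$, not $o(h)$, and
\[
\|(\mathscr{F}_\sigma(h)-U(\sigma+h,\sigma))v\|\le\|\mathscr{F}_\sigma(h)v-v\|+\|v-U(\sigma+h,\sigma)v\|\to 0 .
\]
The first term vanishes because $v=U(\sigma,\sigma)v$ lies in the domain of $\mathscr{F}'_\sigma(0)$. The second vanishes because $U(\cdot,\sigma)v$ is continuous.
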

\begin{remark}
\par Theorem~\ref{thm:Vuil} is stated for complex Banach spaces. We will only be applying the above theorems in either $\mathcal{F}_b$ of bounded functions or Lebesgue spaces $\mathcal{L}^p$. Now since the complex valued bounded functions or the complex valued Lebesgue spaces contain isometric copies of the real valued ones, we are allowed to apply Theorem~\ref{thm:Vuil} in our setting. Alternatively, upon examining the proof in Vuillermot~\cite{V}, one observes that the same proof works for real Banach spaces. 
\end{remark}

\subsection{Weak scalar curvature bounds}
Throughout these notes, \textsf{psc} will mean ``positively scalar curved'' i.e. with positive scalar curvature. Based on the standard asymptotic formula
\begin{align}\label{eq:scal}
	\frac{\mathcal{H}^{n} \left( B^{\tau}_{r}(x) \subset M^n \right)}{\mathcal{H}^{n} \left( B_0(r) \subset \R^n \right)}  = 1 - \frac{r^2}{6(n+2)} \scal_{g(\tau)}(x) f(x) + o(r^4),
\end{align}
(the error term depends locally uniformly on $x$). One deduces \textsf{psc} implies 
\[
\mathcal{H}^{n}(B_r(x)) \le \omega_n r^n,
\]
for $r$ sufficiently small (depending locally uniformly on $x$); c.f. Gromov~\cite[page 7]{Gr}.
\par This motivates us to make the following metric and measure definition that is weaker than \textsf{psc}. 
\begin{definition}[Virtually psc]
	A metric measure space $\left(X, \dist_X, \m  \right)$ is said to be \textsf{virtually psc} if there exists an $r_0(x)>0$ such that
	\[
	\m\left( B_r(x) \right) \le \omega_nr^n, \quad \forall r < r_0(x).
	\]
where $r_0(x)$ can be chosen locally uniformly in $x$. 
\end{definition}
From \eqref{eq:scal}, it is straightforward to see that in the smooth setting, \textsf{virtually psc} is stronger than having non-negative scalar curvature and weaker than \textsf{psc}. 

\par A time-dependent family of metric and measure spaces (resp. Riemannian metrics) is called a \textsf{virtually psc} (resp. \textsf{psc}) flow if each time slice is so and if $r_0$ can be chosen locally uniformly in time as well. 
\section{Static and Dynamic propagators as Trotter-Chernoff type limits}
\par As we previously mentioned, the dynamic diffusions can be rigorously defined using Dirichlet form theory; Kopfer-Sturm~\cite{KS}. But one needs the log-Lipschitz condition. 
\par In these notes, we aim to explicitly construct solutions of dynamic heat and conjugate heat equations using semi-group theory. This approach aligns with the one presented in Kopfer-Sturm~\cite{KS} in the smooth setting but in singular spaces, and without assuming a Dirichlet energy, it gives rise to a formal theory of dynamic heat solutions and dynamic diffusions. So our construction has the added benefit of being applicable to very general metric and measure spaces without requiring further conditions.
\subsection{The building blocks}
\par In Section 4.1 of von Renesse-Sturm~\cite{VS}, it is shown how the heat kernel of a fixed, Riemannian manifold $(M,g)$ may be obtained by applying a Trotter-Chernoff approximation formula to a limit of a family of Markov sphere or ball averaging operators which are defined using only metric-measure properties of $M$. In Lakzian-Munn~\cite{LM1}, a generalization of this construction for a time-dependent family on the disjoint union of two smooth flows in a suitable metric space ambient was presented. 
\par Let $g(\tau)$ be a time-dependent family of smooth Riemannian metrics on $M$. Define a time-dependent family of Markov operators $\upsigma_r$ and $\upnu_r$ acting on the set $\mathcal{F}_b$ of bounded Borel measurable functions  by 
\[
\upsigma^\tau_rf(x) : =  \int_M f(y)~ \dif\s^{\tau}_{r,x}(y), \quad \text{and} \quad \upnu^\tau_rf(x) = \int_M f(y) \dif\bal^\tau_{r,x}(y),
\]
where the normalized surface measures $\s^\tau_{r,x}$ are given by
\[
\s^{\tau}_{r,x} (A) := \frac{\Haus^{n-1}(A \cap \partial B^{\tau}_{r}(x))}{\Haus^{n-1}(\partial B^{\tau}_{r}(x))}, \text{ for Borel measurable sets } A \subset M
,
\]
and the normalized volume measures  $\bal^\tau_{r,x}$ are given by
\[
\bal^{\tau}_{r,x} (A) := \frac{\Haus^{n}(A \cap B^{\tau}_{r}(x))}{\Haus^{n}(B^{\tau}_{r}(x))}, \text{ for Borel measurable sets } A \subset M.
\]
Let $\uptheta^\tau_r$ and $\upeta^\tau_r$ be the linear operators
\begin{align*}
	\uptheta^\tau_r f(x) := \frac{\mathcal{H}^{n-1} \left(\partial B^{\tau}_{r}(x) \subset M^n \right)}{\mathcal{H}^{n-1} \left(\partial B_0(r) \subset \R^n \right)} \Id\left( f(x)\right),
\end{align*}
\begin{align*}
	\upeta^\tau_r f(x) := \frac{\mathcal{H}^{n} \left( B^{\tau}_{r}(x) \subset M^n \right)}{\mathcal{H}^{n} \left( B_0(r) \subset \R^n \right)} \Id\left( f(x)\right).
\end{align*}
\par From standard Riemannian geometry, we have the following asymptotic expansions in $r$ as $r \downarrow 0$, of these operators acting on an $f \in \mathcal{C}^3((M, g(\tau))$.
\begin{align}\label{eq:expansions}
	&\upsigma^\tau_rf(x) = f(x) + \frac{r^2}{2n}\Updelta_{g(\tau)}f(x) + o(r^2); \notag \\
	&\upnu^\tau_rf(x) = f(x) + \frac{r^2}{2(n+2)}\Updelta_{g(\tau)}f(x) + o(r^2); \\
	&\uptheta^\tau_r f(x) = f(x) - \frac{r^2}{6n} \scal_{g(\tau)}(x) f(x) + o(r^4); \notag \\
	&\upeta^\tau_r f(x) = f(x) - \frac{r^2}{6(n+2)} \scal_{g(\tau)}(x) f(x) + o(r^4). \notag
\end{align}
\par Note that the error terms as $r\downarrow 0$ are uniform in $f$ and locally uniform in $x$ since by standard Jacobi field arguments, the error terms are controlled by the local curvature bound and the bound on the first few derivatives (not more than $4$) of the curvature tensor. 
\par Note that we clearly have $\|\sigma^\tau_r\|, \|\upnu^\tau_r\| \le 1$ since these are averaging operators. 
\subsection{The static heat propagator}

\par For a fixed $t \in [s,T]$, the Laplacian $\Updelta_{g(t)}$ is primarily only defined on ${\mathcal{C}}^\infty(M)$ generating a ${\mathcal C}_0$-contractive semigroup (referred to as the static heat flow). Due to the existence of a heat kernel on any manifold (e.g. Chavel~\cite{Cha}), $\Updelta_{g(t)}$ generates a ${\mathcal C}_0$-contractive semigroup in the Banach space $\mathcal{L}^\infty(M)$ with a core ${\mathcal C}^\infty(M)$.

\par For a fixed $t$, and on the core consisting of analytic vectors, the static heat propagator (from time $s_1$ to $s_2$) is given by
\[
\mathcal{H}_{s_1, s_2}^{t} f(x) := e^{ \left( s_2-s_1 \right) \Updelta_{g(t)}} f(x), \quad s_1 \le s_2.
\]

\begin{lemma}\label{lem:unif-diff}
\par Suppose $f_t(x)$ is a smooth dynamic heat solution with smooth initial condition $f_{0}(x)$. Then for any $s \le T$, we have
\[
\lim_{\rho \downarrow 0}	
	\sup_{t \in (s,T)}\left\|\frac{\mathcal{H}^t_{0,\rho} - \ident}{\rho} f_t - \Delta_{g(t)} f_t   \right\| = 0.
	\]
\end{lemma}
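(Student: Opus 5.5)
Since $f_t$ is smooth in $(t,x)$ and $M$ is compact, the idea is to write the difference quotient as an integral of the semigroup applied to $\Delta_{g(t)} f_t$, and then use strong continuity uniformly over the compact family $\{\Delta_{g(t)} f_t : t \in [s,T]\}$.

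First, for a fixed $t$ and $u:=f_t \in \mathcal{C}^\infty(M)$, which lies in the core, we have
\[
\mathcal{H}^t_{0,\rho} u - u = \int_0^\rho e^{r\Updelta_{g(t)}}\Updelta_{g(t)} u \,\dif r .
\]
Consequently,
\[
\frac{\mathcal{H}^t_{0,\rho} - \ident}{\rho} f_t - \Delta_{g(t)} f_t = \frac{1}{\rho}\int_0^\rho \Bigl(e^{r\Updelta_{g(t)}}-\ident\Bigr)\Updelta_{g(t)} f_t \,\dif r .
\]
Setting $h_t:=\Updelta_{g(t)} f_t$, it therefore suffices to show
\[
\sup_{t\in[s,T]}\ \sup_{0\le r\le \rho}\bigl\|e^{r\Updelta_{g(t)}}h_t-h_t\bigr\|\to 0 \quad\text{as } \rho\downarrow 0 .
\]

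For this I repeat the same identity once more:
\[
e^{r\Updelta_{g(t)}}h_t-h_t=\int_0^r e^{q\Updelta_{g(t)}}\Updelta_{g(t)}h_t\,\dif q .
\]
By contractivity of the static heat semigroup in $\mathcal{L}^\infty$, its norm is at most $r\,\|\Updelta_{g(t)}\Updelta_{g(t)} f_t\|_\infty$. Hence the whole expression is bounded by
\[
\frac{\rho}{2}\,\sup_{t\in[s,T]}\bigl\|\Updelta_{g(t)}^2 f_t\bigr\|_{\infty}.
\]

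The remaining step, which I expect to be the main obstacle, is to show this supremum is finite. I would argue as follows:
\begin{itemize}
\item $\Updelta_{g(t)}^2 f_t$ is a fourth-order expression in $f_t$ whose coefficients involve $g(t)$, $g(t)^{-1}$ and up to three derivatives of $g(t)$ in space.
\item These coefficients are continuous in $t$, since $g$ is $\mathcal{C}^1$ in $t$ and smooth in $x$.
\item The derivatives of $f_t$ up to order four are jointly continuous on the compact set $[s,T]\times M$, because $f$ is assumed to be a smooth dynamic heat solution, meaning smooth jointly in $(t,x)$.
\end{itemize}
Continuity on a compact set then gives a uniform bound.

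If only uniform $\mathcal{C}^2$ control were available, I would instead fall back on a compactness argument. The set $\{h_t\}_{t\in[s,T]}$ is compact in $\mathcal{C}^0$, and the generators $\Updelta_{g(t)}$ vary continuously, so strong continuity of $(t,r)\mapsto e^{r\Updelta_{g(t)}}$ on this compact family would give the required uniform convergence.

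Finally, I would note one technical point. The $\mathcal{L}^\infty$ semigroup is not strongly continuous on all of $\mathcal{L}^\infty$, but it is strongly continuous on $\mathcal{C}^0$, where the smooth functions in question live. The integral identities above are therefore justified in $\mathcal{C}(M)$ with the sup norm, which is the norm used in the statement.
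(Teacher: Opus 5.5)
Your proposal is correct and follows essentially the paper's argument. Your iterated integral is the second-order Taylor remainder for the semigroup. You move the semigroup outside $\Delta_{g(t)}^2$, and contractivity gives the bound $\frac{\rho}{2}\|\Delta_{g(t)}^2 f_t\|$. The paper then closes the argument exactly as you do, bounding $\sup_{t}\|\Delta_{g(t)}^2 f_t\|$ uniformly by joint smoothness and compactness. Your remark that the identities should be read in $\mathcal{C}(M)$ is extra care the paper skips, since the heat semigroup is strongly continuous there but not on all of $\mathcal{L}^\infty$.
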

\begin{proof}[\footnotesize \textbf{Proof}]
\par By the PDE theory of dynamic heat flows we know $f_t$ is smooth for $t>0$. 
\par By Taylor's theorem with remainder for semigroups, and by the commutativity $\Delta^2_{g(t)}\mathcal{H}^t_{0,\theta} = \mathcal{H}^t_{0,\theta}\Delta^2_{g(t)}$, we obtain
\begin{align*}
\left\|\frac{\mathcal{H}^t_{0,\rho} - \ident}{\rho} f_t - \Delta_{g(t)} f_t   \right\| &\le \frac{\rho}{2} \sup_{0\le \theta \le \rho} \| \Delta^2_{g(t)}\mathcal{H}^t_{0,\theta}f_t \| \\ &= \frac{\rho}{2} \sup_{0\le \theta \le \rho} \| \mathcal{H}^t_{0,\theta} \Delta^2_{g(t)} f_t \|\\
&\le \frac{\rho}{2} \| \Delta^2_{g(t)} f_t \|,
\end{align*}
where the last inequality is due to the contraction property of the static heat flow. 
\par By the smoothness of $f_t$ and the compactness of $M$, we deduce $\sup_{t\in [s,T]} \| \Delta^2_{g(t)} f_t \|$ is bounded for $t \in [s,T]$ for any $s \le T$. Therefore, we get the desired uniform limit. 
\end{proof}
\begin{theorem}[Static heat flow]\label{thm:static-h}
	\par Let $\left( M, g(t)  \right)$ be a time-dependent family of complete Riemannian manifolds. For a fixed $t$, the static heat propagator $
	\mathcal{H}_{s_1, s_2}^{t}$
	is given by the limits
	\begin{align}\label{eq:for-st}
		\mathcal{H}_{s_1,s_2}^{t} f(x) = \lim_{j \to \infty} \left( \upsigma^{t}_{\sqrt{\frac{2n}{j}\left( s_2 - s_1 \right)}}   \right)^j f(x)   = \lim_{j \to \infty} \left( \upnu^{t}_{\sqrt{\frac{2(n+2)}{j}\left( s_2 - s_1 \right)}}   \right)^j f(x),
	\end{align}
	where the convergence is uniformly in $x$ and locally uniformly in $s$. The operator convergence is in the strong operator topology of $\mathcal{L}(\mathcal{F}_b)$. 
\end{theorem}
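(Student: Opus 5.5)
The plan is to apply the Trotter--Chernoff product formula (Theorem~\ref{thm:stc}) with $t$ frozen, once for the sphere averages and once for the ball averages. For the sphere averages I take the Banach space $\mathbf{B}=\mathcal{F}_b$, or its closed subspace of continuous functions, on which the static heat semigroup is $\mathcal{C}^0$. The candidate family is
\[
\mathscr{F}(\uprho):=\upsigma^{t}_{\sqrt{2n\uprho}}, \qquad \mathscr{F}(0)=\ident,
\]
so that $\mathscr{F}\bigl(\tfrac{s_2-s_1}{j}\bigr)^j=\bigl(\upsigma^{t}_{\sqrt{\frac{2n}{j}(s_2-s_1)}}\bigr)^j$, which is exactly the expression in \eqref{eq:for-st}. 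The ball case is identical with $\mathscr{F}(\uprho):=\upnu^{t}_{\sqrt{2(n+2)\uprho}}$.

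The hypotheses of Theorem~\ref{thm:stc} are checked in three steps.
\begin{enumerate}
\item \emph{Growth.} Since $\upsigma^t_r$ and $\upnu^t_r$ are Markov averaging operators, $\|\mathscr{F}(\uprho)\|\le 1=e^{0\cdot\uprho}$. So the growth condition holds with $\omega=0$.
\item \emph{Derivative at zero.} Substitute $r^2=2n\uprho$ into the first expansion of \eqref{eq:expansions}:
\[
\mathscr{F}(\uprho)f=f+\uprho\,\Updelta_{g(t)}f+o(\uprho).
\]
The error is uniform in $x$ because $M$ is compact and the error term is controlled by curvature and a few of its derivatives. Hence $\mathscr{F}'(0)f=\Updelta_{g(t)}f$ in sup norm for every $f\in\mathcal{C}^3$, in particular on $\mathcal{C}^\infty(M)$. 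Likewise, $r^2=2(n+2)\uprho$ in the second expansion gives the same derivative for $\upnu$.
\item \emph{Conclusion.} Theorem~\ref{thm:stc} then yields $e^{(s_2-s_1)\Updelta_{g(t)}}f=\lim_j \mathscr{F}\bigl(\tfrac{s_2-s_1}{j}\bigr)^j f$. The limit is locally uniform in $s_2-s_1$ and converges in the sup norm, i.e.\ uniformly in $x$.
\end{enumerate}

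The main obstacle is that Theorem~\ref{thm:stc} requires $\mathscr{F}'(0)=\mathcal{A}$ on all of $\mathsf{Dom}(\mathcal{A})$, while the expansion only gives this on smooth functions. This is bridged by the core version of the Trotter--Kato/Chernoff theorem. Because $\mathcal{C}^\infty(M)$ is a core for $\Updelta_{g(t)}$, as stated before Lemma~\ref{lem:unif-diff}, it suffices to check the derivative on the core. Concretely:
\begin{enumerate}
\item Let $D$ be the closure of $\Updelta_{g(t)}$ restricted to $\mathcal{C}^\infty$. Since it is a core, $D$ is the generator itself.
\item Compute the limit $\mathscr{F}'(0)$ on $\mathcal{C}^\infty$.
\item Use the contraction bound $\|\mathscr{F}(\uprho)\|\le1$ to pass from the core to the full convergence statement. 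In Pazy's formulation this is the version in which the closure of $\mathscr{F}'(0)$ on a core generates $T$.
\end{enumerate}

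Finally, the convergence must be extended from continuous $f$ to all of $\mathcal{F}_b$, because the strong continuity of the heat semigroup fails on general bounded Borel functions. For this I would use the smoothing property that $\upsigma_r$ and $\upnu_r$ map $\mathcal{F}_b$ into continuous functions for $r>0$. I would also use density of continuous functions in $\mathcal{L}^1$ against the heat kernel: the averaging measures have densities that are uniformly controlled after one step, so a bounded $f$ can be approximated after applying one averaging operator. I expect this extension, rather than the generator computation, to need the most care.
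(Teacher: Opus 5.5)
Your Chernoff step is exactly the paper's proof. The paper sets $\mathscr{F}(\uprho)=\upsigma^t_{\sqrt{2n\uprho}}$, notes $\mathscr{F}(0)=\ident$ and $\|\mathscr{F}(\uprho)\|\le 1$, and invokes Theorem~\ref{thm:stc}. Your use of the core version (derivative only on $\mathcal{C}^\infty(M)$, with $(\lambda-\Updelta_{g(t)})\mathcal{C}^\infty(M)$ dense) is a correct refinement.

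You are also right that the static heat semigroup is not $\mathcal{C}^0$ on $\mathcal{F}_b$. For $s>0$, $\mathcal{H}^t_{0,s}f$ is continuous, so uniform convergence $\mathcal{H}^t_{0,s}f\to f$ forces $f$ to be continuous. The paper does not address this; it treats the semigroup as $\mathcal{C}^0$ on $\mathcal{L}^\infty$ with core $\mathcal{C}^\infty(M)$. Legitimately, Theorem~\ref{thm:stc} gives the statement only for $f\in\mathcal{C}(M)$. Everything beyond that rests on your extension step, and that step has a genuine gap.

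\textbf{Sphere averages.} $\upsigma_r$ does \emph{not} map $\mathcal{F}_b$ into continuous functions, because sphere measures are singular. For $n\ge 2$, small $r$ and $f=\mathbbm{1}_{\partial B_r(x_0)}$, you get $\upsigma_r f(x_0)=1$ but $\upsigma_r f(x)=0$ for $x\neq x_0$ nearby. Worse, $\upsigma_r$ sees Lebesgue-null sets while $\mathcal{H}^t_{s_1,s_2}$ does not. So no approximation ``in $\mathcal{L}^1$ against the heat kernel'' can control the first factor.

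For $n=1$ this destroys the sphere version of the statement on $\mathcal{F}_b$ outright. On $\Sph^1$, $\upsigma_r f(x)=\tfrac12\bigl(f(x+r)+f(x-r)\bigr)$. Let $A$ be the countable subgroup generated by all $r_j=\sqrt{2(s_2-s_1)/j}$ and $f=\mathbbm{1}_A$. Then $(\upsigma_{r_j})^j f(0)=1$ for every $j$, whereas $\mathcal{H}^t_{s_1,s_2}f\equiv 0$. Any correct extension must therefore use that, for $n\ge 2$, the sphere kernels become absolutely continuous from two steps on.

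\textbf{Ball averages.} $\upnu_r$ does map $\mathcal{F}_b$ into $\mathcal{C}(M)$, but one smoothing step is still not enough. The kernel of $\upnu_{r_j}$ has density of size $r_j^{-n}$, so it is not ``uniformly controlled''. Accordingly $g_j:=\upnu_{r_j}f$ oscillates on scale $r_j\to 0$. For $f$ the indicator of a smooth domain, $\{g_j\}$ is not precompact in $\mathcal{C}(M)$. Strong convergence on $\mathcal{C}(M)$ is uniform only on compact sets of data, so it says nothing about $(\upnu_{r_j})^{j-1}g_j$.

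What is missing is a quantitative ingredient that Chernoff's theorem does not supply. Two options would work:
\begin{enumerate}
\item a uniform-in-$x$ local limit theorem, i.e.\ total-variation convergence of the $j$-step kernels to $p_{s_2-s_1}(x,\cdot)\,\dvol_{g(t)}$;
\item an equicontinuity estimate, uniform in $j$, for $(\upnu_{r_j})^{k_j}f$ with $k_j/j$ bounded away from $0$. This reduces you to a precompact family in $\mathcal{C}(M)$, where your Chernoff result applies.
\end{enumerate}
Neither follows from the ingredients in your proposal or in the paper.
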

\begin{proof}[\footnotesize \textbf{Proof}]
	\par To prove the representation of the static flow, set $\mathscr{F}(\rho) := \upsigma^t_{\sqrt{2n\rho}} $ ($t$ is fixed) and note that $\mathscr{F}(0)= \ident$. Since $\|\upsigma^\tau_{\sqrt{2n\rho}}\|\le 1$ , upon applying the Theorem~\ref{thm:stc}, one gets \eqref{eq:for-st}. 
\end{proof}
\subsection{The dynamic heat flow under under a flow of metrics}
\par The standard semigroup theory for evolution equations implies existence of an evolution system for the dynamic heat equation (see Pazy~\cite[Theorem 5.1]{Paz}) on $\mathcal{L}^\infty(M)$. By the construction of the evolution system and the continuity of the generators $\Updelta_{g(t)}$, one deduces the evolution system is continuous in both $t$ and $s$ in the strong operator topology. It is also unique and $\mathcal{C}^1$ in initial and terminal times (Pazy~\cite[Theorem 5.2]{Paz}). This evolution system is known as the dynamic heat propagator and, following the notation of Kopfer-Sturm~\cite{KS}, we denote it by $\mathcal{P}_{s_1,s_2}$. In other terms, $\mathcal{P}_{s_1, s_2} f(x)$ solves the dynamic heat flow (along a smooth Ricci flow) with the initial condition (at time $s_1$) $f(x)$. 
\par Applying the maximum principle, it is clear that $\|\mathcal{P}_{s_1, s_2}\| \le 1$.
\begin{theorem}[Dynamic heat flow]\label{thm:dynamic-h}
 \par The dynamic heat propagator $\mathcal{P}_{s_1, s_2}$ is then given by the following Trotter-Chernoff type product formulae
	\[
	\mathcal{P}_{s_1, s_2} f (x) = \lim_{m \to \infty} \prod_{\ell = m-1}^0 \; {\mathcal H}_{0, \frac{s_2 - s_1}{m}}^{s_1 + \ell\frac{\left( s_2 - s_1  \right)}{m}} f(x), \quad s_1 \le s_2,
	\]
which in turn implies the double limit formulation
	\begin{align}
		\mathcal{P}_{s_1, s_2} f (x) &= \lim_{m \to \infty} \prod_{\ell = m-1}^0 \; \lim_{j \to \infty} \left( \upsigma^{s_1 + \ell\frac{\left( s_2 - s_1  \right)}{m}}_{\sqrt{\frac{2n}{j}\frac{\left( s_2 - s_1 \right)}{m}}}   \right)^j f(x) \notag \\ &=  \lim_{m \to \infty} \prod_{\ell = m-1}^0 \; \lim_{j \to \infty} \left( \upnu^{s_1 + \ell\frac{\left( s_2 - s_1  \right)}{m}}_{\sqrt{\frac{2(n+2)}{j}\frac{\left( s_2 - s_1 \right)}{m}}}   \right)^j f(x). \notag
	\end{align}
Alternatively, the dynamic heat flow can be obtained via the single limit formula
\begin{align*}
	\mathcal{P}_{s_1, s_2} f (x) &= \lim_{m \to \infty}\prod_{\ell = m-1}^0 \upsigma^{s_1 + \ell \left( \frac{s_2-s_1}{m} \right)}_{\sqrt{2n\left( \frac{s_2-s_1}{m} \right)}}\\
	&= \lim_{m \to \infty}\prod_{\ell = m-1}^0 \upnu^{s_1 + \ell \left( \frac{s_2-s_1}{m} \right)}_{\sqrt{2(n+2)\left( \frac{s_2-s_1}{m} \right)}}.
\end{align*}
\end{theorem}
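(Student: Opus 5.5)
The plan is to apply the time-dependent Trotter--Chernoff formula, Theorem~\ref{thm:Vuil}, on $\mathbf{B} = \mathcal{L}^\infty(M)$ (or $\mathcal{F}_b$) three times, with three different choices of $\mathscr{F}_\tau$. In each case the evolution system $U(\tau,\sigma)$ is the dynamic heat propagator $\mathcal{P}_{\sigma,\tau}$. By the discussion preceding the theorem, this propagator exists, is unique, is $\mathcal{C}^1$ in its time arguments, and is a contraction.

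\textbf{First formula.} Take $\mathscr{F}_t(\rho) := \mathcal{H}^t_{0,\rho} = e^{\rho\Updelta_{g(t)}}$.
\begin{itemize}
\item Clearly $\mathscr{F}_t(0)=\ident$.
\item Since the static heat flow is contractive, the growth bound holds with $\omega = 0$, uniformly in $t$.
\item The derivative is $\mathscr{F}_t'(0) = \Updelta_{g(t)}$ on smooth functions.
\item For the dense set $\mathbf{D}$ I take $\mathcal{C}^\infty(M)$. For $v\in\mathbf{D}$, the function $f_t := \mathcal{P}_{s,t}v$ is a smooth dynamic heat solution, so it lies in the domain of $\Updelta_{g(t)}$.
\item The key uniform-differentiability hypothesis of Theorem~\ref{thm:Vuil} is then exactly the content of Lemma~\ref{lem:unif-diff}.
\end{itemize}
Theorem~\ref{thm:Vuil} therefore yields the first product formula.

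\textbf{Double limit.} Substitute Theorem~\ref{thm:static-h} into each of the $m$ factors. A finite product of strongly convergent sequences of contractions converges strongly to the product of the limits. This is a telescoping estimate: the uniform bound $\le 1$ controls the factors on the left, and strong convergence handles each factor applied to a fixed vector. Hence for fixed $m$, each inner $\lim_j$ can be taken inside the product, which gives the double-limit expressions.

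\textbf{Single limit.} Apply Theorem~\ref{thm:Vuil} directly with $\mathscr{F}_t(\rho) := \upsigma^t_{\sqrt{2n\rho}}$ (respectively $\upnu^t_{\sqrt{2(n+2)\rho}}$).
\begin{itemize}
\item We have $\mathscr{F}_t(0)=\ident$ and $\|\mathscr{F}_t(\rho)\|\le 1$, since these are averaging operators.
\item By \eqref{eq:expansions}, for $f\in\mathcal{C}^3$ we get $\upsigma^t_{\sqrt{2n\rho}}f = f + \rho\,\Updelta_{g(t)}f + o(\rho)$, so $\mathscr{F}_t'(0)=\Updelta_{g(t)}$.
\end{itemize}
With $m$ steps of size $\rho=(s_2-s_1)/m$, the product in Theorem~\ref{thm:Vuil} is precisely the stated single-limit product.

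\textbf{Main obstacle.} What remains is the uniformity hypothesis:
\[
\lim_{\rho\downarrow 0}\sup_{t}\Bigl\| \rho^{-1}\bigl(\upsigma^t_{\sqrt{2n\rho}} - \ident\bigr) f_t - \Updelta_{g(t)} f_t\Bigr\| = 0 .
\]
This is the step I expect to be hardest. The $o(r^2)$ remainder in \eqref{eq:expansions} must be shown to be uniform both in $x$ and in $t\in[s,T]$ when applied to the moving family $f_t$.

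I would handle it with a second-order Taylor expansion in normal coordinates of $g(t)$. The remainder is then bounded by $C r^3$, where $C$ depends on:
\begin{itemize}
\item $\sup_{t}\|f_t\|_{\mathcal{C}^3(g(t))}$;
\item curvature bounds and a few covariant derivatives of curvature of $g(t)$;
\item a uniform lower bound on the injectivity radius.
\end{itemize}
All of these are uniformly bounded on the compact set $M\times[s,T]$. The reason is that $g(t)$ is smooth and $\mathcal{C}^1$ in $t$, and $f_t$ is a smooth dynamic heat solution by parabolic regularity. This gives the required uniform $o(1)$ after dividing by $\rho=r^2/(2n)$, and the same argument works for $\upnu$.
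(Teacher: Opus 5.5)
Your proposal is correct and follows the paper's proof. It uses Theorem~\ref{thm:Vuil} with $\mathscr{F}_t(\rho)=\mathcal{H}^t_{0,\rho}$ and Lemma~\ref{lem:unif-diff} for the first formula, substitutes Theorem~\ref{thm:static-h} into each factor for the double limit, and applies Theorem~\ref{thm:Vuil} with $\upsigma^t_{\sqrt{2n\rho}}$ (resp.\ $\upnu^t_{\sqrt{2(n+2)\rho}}$) for the single limit, where you additionally supply the uniform Taylor-remainder estimate that the paper only delegates to Lakzian--Munn. One caveat, shared with the paper: $\mathcal{C}^\infty(M)$ is dense in $\mathcal{C}(M)$ but not in $\mathcal{L}^\infty(M)$ or $\mathcal{F}_b$, and the heat semigroup is not strongly continuous there either, so the density hypothesis of Theorem~\ref{thm:Vuil} is really met only if you take $\mathbf{B}=\mathcal{C}(M)$ with the sup norm.
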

\begin{proof}[\footnotesize \textbf{Proof}]
\par For the double limit representation of the dynamic heat flow, take $\mathscr{I}_t(\rho) := \mathcal{H}^t_{0,\rho}$ with $\mathscr{I}_t(0) = \ident$ and notice $\mathscr{I}_t'(0) = \Delta_{g(t)}$. The other requirements of Theorem~\ref{thm:Vuil} follows from the standard properties of the dynamic heat flow and its corresponding system. Based on Lemma~\ref{lem:unif-diff} and applying Theorem~\ref{thm:Vuil}, we get the conclusion for $f$ in the core $\mathcal{C}^\infty(M)$. 
\par To prove the single limit representation, let $\mathscr{J}_t(\rho) := \upsigma^t_{\sqrt{2n\rho}}$ and apply Theorem~\ref{thm:Vuil}; for more details, see the proof of the single limit representation in Lakzian-Munn~\cite{LM1}. 
\end{proof}
\begin{remark}
\par Indeed, the representation of the static heat and dynamic heat flow as in the Theorem~\ref{thm:static} also follows from more classical approximation theorems than the ones given in~\textsection\thinspace\ref{subsec:Trotter-Chernoff}. Since the semigroups involved are contraction semigroups, one can apply the contractive approximation theorems such as the one in Ethier-Kurtz~\cite[Chapter 1, Theorem 6.5]{EK} for the static case and use the Faris~\cite{Faris} for the dynamic case. This way, one will not need the Lemma~\ref{lem:unif-diff}. 
\end{remark}
\subsection{Conjugate heat flow under the Ricci flow}\label{subsec:chf}
\par According to the bounded perturbation theorem (see Engel-Nagel~\cite[Theorem 1.3]{EN}), the static (i.e. $\tau$ is kept fixed) conjugate heat generator under the Ricci flow 
\[
\mathcal{CH}^{\tau}:=\Updelta_{g(\tau)} - \scal_{g(\tau)} \ident,
\]
generates a $\mathcal{C}_0$ semigroup on ${\mathcal{L}}^\infty(M)$  with a core ${\mathcal C}^\infty(M)$. We denote this semigroup (or indeed its corresponding static evolution system) by $\mathcal{CH}^{\tau}_{s_1,s_2}$ where $s_1$ is the start time of the flow and $s_2$ denotes the stop time. 
\par Moreover, following the standard theory of evolution semigroups, we obtain an evolution system known as the dynamic conjugate heat propagator. Following the notations of Kopfer-Sturm~\cite{KS}, we denote it by $\mathcal{P}^\star_{\tau_1, \tau_2}$. In other words, $\mathcal{P}^\star_{\tau_1, \tau_2} f(x)$ solves the conjugate heat flow (along a smooth Ricci flow) with the initial condition $f(x)$. 
\par Recall that the dynamic heat and conjugate heat flows (along any flow of metrics) are dual via the identity 
	\begin{align}\label{eq:duality}
	\mathcal{P}^\star\left(  \tau_0, \tau \right) g(y) := \int_M \;  \mathcal{P}_{\tau, \tau_0}\mathbf{\delta_y}(x)   \; g(x) \; \dvol_{g(\tau)}, \quad  \tau_0 \le \tau,
	\end{align}
	where
$P_{\tau, \tau_0}\mathbf{\delta_y}(x) $ is the delta function propagated under heat flow which is nothing but the dynamic heat transition kernel.	
\subsubsection{Static conjugate heat propagator}
\par Consider the operators
\begin{align}\label{eq:alpha-beta}
\upalpha^\tau_r := \frac{1}{4} \upsigma^\tau_r + \frac{3}{4} \uptheta^\tau_r, \quad \upbeta^\tau_r := \frac{1}{4} \upnu^\tau_r + \frac{3}{4} \upeta^\tau_r.
\end{align}
Using \eqref{eq:expansions}, we obtain the asymptotic expansions
\[
\upalpha^\tau_r f(x) = f(x) + \frac{r^2}{8n} \left( \Updelta_{g(\tau)} - \scal_{g(\tau) }(x)\right) f(x) + o(r^2),
\]
and
\[
\upbeta^\tau_r f(x) = f(x) + \frac{r^2}{8(n+2)} \left( \Updelta_{g(\tau)} - \scal_{g(\tau) }(x)\right) f(x) + o(r^2)
\]
as $r \downarrow 0$.
\par Expressing the dynamic conjugate heat propagator as a Trotter-Chernoff type product of operators expressed in terms of metric and measure is more challenging due to the inhomogeneous nature of its generator. For now, we will only be able to deal with the case where the scalar curvature is positive along the Ricci flow. 
\par Note that since $M$ is assumed compact, by the evolution equation of scalar curvature under Ricci flow
\[
\label{eqn:sc-evol-RF}
\frac{\partial \scal_{g(\tau)}}{ \partial \tau } =  - \Updelta_{g(\tau)}\scal_{g(\tau)} - \|\Ric_{g(\tau)}\|^2,
\]
the scalar curvature is positive along the flow (on the interval $[\tau_1,\tau_2]$) if and only if $\scal_{g(\tau_2)} >0$. 
\begin{lemma}\label{lem:growth-conj}
	Suppose $g(\tau)$ is a Ricci flow with $\scal_\tau \ge 0$. Then 
\[
 \|\mathcal{CH}^{\tau}_{0,\rho}\|\le 1.
\]
If $g(\tau)$ is furthermore a \textsf{virtually psc} Ricci flow (in particular, if it is \textsf{psc}), then
\[
\|\alpha^\tau_r\|, \|\upbeta^\tau_r\| \le 1,
\]
as well for sufficiently small $r>0$ (locally uniform in $\tau$). 
\end{lemma}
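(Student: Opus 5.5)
For the first claim I would use a maximum principle and work in $\mathcal{L}^\infty(M)$ (equivalently on bounded functions).

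Fix $\tau$ and let $u(s)=\mathcal{CH}^\tau_{0,s}f$. For $f$ in the core $\mathcal{C}^\infty(M)$, $u$ solves the static linear parabolic equation
\[
\partial_s u=\Updelta_{g(\tau)}u-\scal_{g(\tau)}u .
\]
Its coefficients are smooth and $M$ is compact. At an interior positive maximum of $u(s,\cdot)$ we have $\Updelta_{g(\tau)} u\le 0$ and $-\scal_{g(\tau)} u\le 0$, because $\scal_{g(\tau)}\ge 0$. Hence $\max u(s,\cdot)$ is non-increasing in $s$. Applying the same argument to $-u$ shows that $\min u(s,\cdot)$ is non-decreasing. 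Therefore $\|u(s)\|_\infty\le\|f\|_\infty$.

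If one prefers to avoid the strict-inequality subtlety, compare instead with $e^{-\varepsilon s}u$ and let $\varepsilon\downarrow 0$. Alternatively, write $\mathcal{CH}^\tau_{0,s}$ through Feynman--Kac as
\[
\mathbb{E}\Bigl[f(B_s)\,e^{-\int_0^s\scal(B_r)\,\dif r}\Bigr],
\]
whose weight is bounded by $1$. By density of the core and boundedness of the semigroup (from the bounded perturbation theorem), the bound extends to all of $\mathcal{L}^\infty(M)$.

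For the second claim I would compute the norms directly, since all the operators are explicit. The operators $\upsigma^\tau_r$ and $\upnu^\tau_r$ are Markov averaging operators, so they are positive with norm $\le 1$. The operators $\uptheta^\tau_r$ and $\upeta^\tau_r$ are multiplications by the nonnegative functions
\[
\frac{\Haus^{n-1}(\partial B^\tau_r(x))}{\Haus^{n-1}(\partial B_0(r))}
\quad\text{and}\quad
\frac{\Haus^{n}(B^\tau_r(x))}{\omega_nr^n}.
\]
Thus $\upalpha^\tau_r$ and $\upbeta^\tau_r$ are positive operators, and their norms equal the sup of the image of the constant $1$. In particular,
\[
\|\upbeta^\tau_r\|=\sup_x\Bigl(\tfrac14+\tfrac34\,\tfrac{\Haus^n(B^\tau_r(x))}{\omega_nr^n}\Bigr).
\]
The virtually psc hypothesis gives $\Haus^n(B^\tau_r(x))\le\omega_nr^n$ for $r<r_0$, uniformly in $x$ by compactness and locally uniformly in $\tau$. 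This yields $\|\upbeta^\tau_r\|\le 1$ immediately.

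The main obstacle is $\upalpha^\tau_r$, which needs the sphere-area ratio to be $\le 1$. Virtually psc only gives the volume ratio directly. I would first try to deduce the sphere-area bound from the ball-volume bound by differentiating in $r$, but a volume bound does not control the area pointwise in $r$.

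So in the psc case I would use the expansion \eqref{eq:expansions} instead. It gives
\[
\frac{\Haus^{n-1}(\partial B^\tau_r(x))}{\Haus^{n-1}(\partial B_0(r))}
=1-\frac{r^2}{6n}\scal_{g(\tau)}(x)+o(r^4),
\]
with the error locally uniform. Since $\scal_{g(\tau)}\ge c>0$ on the compact $M\times[\tau_1,\tau_2]$, the ratio is $\le 1$ for $r$ small. For the virtually psc case without strict positivity, I expect one must either interpret $\upalpha$ with this psc input or note that the statement's parenthetical ``in particular, if it is psc'' is where $\upalpha$ is genuinely covered. I would flag this step as the delicate one.
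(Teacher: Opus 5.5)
Your first claim follows the paper's argument. The paper also applies the maximum principle to $\partial_\rho u=(\Updelta_{g(\tau)}-\scal_{g(\tau)})u$: at a positive maximum it uses $\Updelta_{g(\tau)}u\le 0$ and $-\scal_{g(\tau)}u\le 0$, and the symmetric facts at a negative minimum. Your $\varepsilon$-perturbation only makes this rigorous.

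One correction: $\mathcal{C}^\infty(M)$ is not sup-norm dense in $\mathcal{L}^\infty(M)$, so you cannot pass to general bounded $f$ by density. Your Feynman--Kac formula already gives $|\mathcal{CH}^\tau_{0,\rho}f|\le\|f\|_\infty$ for every bounded $f$ directly. Equivalently, use positivity of $\mathcal{CH}^\tau_{0,\rho}$ together with $\mathcal{CH}^\tau_{0,\rho}1\le 1$.

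Your bound on $\upbeta^\tau_r$ is also the paper's. The paper uses $\|\upbeta^\tau_r\|\le\frac14\|\upnu^\tau_r\|+\frac34\|\upeta^\tau_r\|$ instead of your exact formula for positive operators.

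Your hesitation about $\upalpha^\tau_r$ is justified, and the gap you flag is also in the paper's proof. The paper simply asserts that \textsf{virtually psc} plus compactness yields a uniform $r_0$ with $\|\uptheta^\tau_r\|,\|\upeta^\tau_r\|\le 1$.

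To see why this does not follow, write $D_x(r):=\Haus^n(B^\tau_r(x))-\omega_nr^n$. The bound $\|\uptheta^\tau_r\|\le 1$ for $r<r_1$ says exactly that $D_x'\le 0$ on $(0,r_1)$ for all $x$. \textsf{Virtually psc} only says $D_x\le 0$ there, and since $D_x(0)=0$ this does not force monotonicity. The implication does go through when the $r^2$-coefficients in the expansions of $\uptheta^\tau_r$ and $\upeta^\tau_r$ are uniformly negative, i.e.\ under \textsf{psc}, which is exactly what your expansion argument uses. So your proposal proves everything the paper's proof actually justifies. The bound $\|\upalpha^\tau_r\|\le 1$ under \textsf{virtually psc} without \textsf{psc} is unproven in both.

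If you want to go further along a Ricci flow with $\scal\ge 0$, apply the strong maximum principle to the forward evolution $\partial_t\scal=\Updelta\scal+2|\Ric|^2$. It gives a dichotomy:
\begin{itemize}
\item either $\scal_{g(\tau)}>0$ for every $\tau<\tau_2$, so your argument works on compact subintervals of $[\tau_1,\tau_2)$;
\item or the flow is static and Ricci-flat, and Bishop's area comparison gives $\|\uptheta^\tau_r\|\le 1$ for all $r$.
\end{itemize}
What remains is uniformity in $r$ as $\tau\to\tau_2$ when $\scal_{g(\tau_2)}$ has zeros.
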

\begin{proof}[\footnotesize \textbf{Proof}]
\par The first claim is by maximum principle. Suppose $\|f\|\le 1$. Suppose for some $\rho>0$, $\mathcal{CH}^{\tau}_{0,\rho}f$ achieves a positive maximum at some point $x_0$. Then
\[
\partial_\rho \mathcal{CH}^{\tau}_{0,\rho}f(x_0) = \left(\Updelta_{g(\tau)} - \scal_{g(\tau)}\right) f(x_0) \le 0,
\]
due to the fact that $\Updelta_{g(\tau)}f(x_0)\le 0$ at a maximum point, and also by nonnegativity of the scalar curvature we have $-\scal_{g(\tau)} f(x_0) \le 0$. This indicates that the positive maximum never increases along the flow and similarly, negative minimums never decreases. Thus, we conclude that
\[
\|\mathcal{CH}^{\tau}_{0,\rho}f\|\le \|f\|,
\]
which implies $\|\mathcal{CH}^{\tau}_{0,\rho}\|\le 1$. 
\par Assuming $g(\tau)$, $\tau_1 \le \tau \le \tau_2$ is a \textsf{virtually psc} Ricci flow, and since $M$ is compact, there exists a uniform (in $x$) $r_0>0$ such that for $r\le r_0$, we have $\|\uptheta^\tau_r\|, \|\upeta^\tau_r\|\le 1$. Consequently, thus from the definition, we also obtain $\|\upalpha^\tau_r\|$, $\|\upbeta^\tau_r\| \le 1$. Note that the error terms in the expansions \eqref{eq:expansions} are governed by the curvature bounds (using standard Jacobi field arguments) and since $M$ is compact, one deduces that such $r_0>0$ can be chosen locally uniformly in $\tau$. 
\end{proof}
\begin{theorem}[''Static'' conjugate heat propagator]
	Suppose $g(\tau)$ is a \textsf{virtually psc} Ricci flow (in particular, if it is \textsf{psc}). Then,
	\[
	\mathcal{CH}^\tau_{\tau_1, \tau_2} f (x) = \lim_{j \to \infty} \left( \upalpha^\tau_{\sqrt{8n\frac{\left(\tau_2 - \tau_1 \right)}{j}}}   \right)^j f(x) = \lim_{j \to \infty} \left( \upbeta^\tau_{\sqrt{8(n+2)\frac{\left(\tau_2 - \tau_1 \right)}{j}}}   \right)^j f(x).
	\]
for $f \in \mathcal{F}_b$.
\end{theorem}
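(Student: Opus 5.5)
We apply the Trotter--Chernoff product formula (Theorem~\ref{thm:stc}) with $\tau$ held fixed, following the proof of Theorem~\ref{thm:static-h}. Set
\[
\mathscr{F}(\rho) := \upalpha^\tau_{\sqrt{8n\rho}} \quad (\text{resp. } \mathscr{G}(\rho) := \upbeta^\tau_{\sqrt{8(n+2)\rho}}),
\]
acting on $\mathcal{F}_b$. Clearly $\mathscr{F}(0) = \ident$, since $\upsigma^\tau_0 = \uptheta^\tau_0 = \ident$ by the stated conventions. Then $\mathscr{F}(\rho)^j$ with $\rho = (\tau_2-\tau_1)/j$ is exactly the $j$-th power in the statement. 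The target semigroup is generated by $\mathcal{CH}^\tau = \Updelta_{g(\tau)} - \scal_{g(\tau)}\ident$. As recalled in \textsection\thinspace\ref{subsec:chf}, this is a $\mathcal{C}_0$-semigroup with core $\mathcal{C}^\infty(M)$ by the bounded perturbation theorem.

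\textbf{Step 1 (growth).} By Lemma~\ref{lem:growth-conj}, the virtually psc hypothesis gives $\|\upalpha^\tau_r\|, \|\upbeta^\tau_r\| \le 1$ for $r \le r_0$, with $r_0$ uniform in $x$ by compactness. So $\|\mathscr{F}(\rho)\| \le 1$ for $\rho \le r_0^2/(8n)$. For larger $\rho$ we only need some exponential bound. Both $\upsigma^\tau_r$ and $\uptheta^\tau_r$ are bounded: $\uptheta^\tau_r$ is multiplication by a ratio of areas, which is bounded on the compact $M$ for $r$ in any bounded range, and for $r$ beyond the diameter the definition is adjusted harmlessly. So $\|\mathscr{F}(\rho)\| \le C \le e^{\omega\rho}$ for $\rho \ge \rho_0$ with a suitable $\omega$. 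Alternatively, Theorem~\ref{thm:stc} only uses the growth bound along $\rho = t/j \to 0$, so the local bound $\|\mathscr{F}(\rho)\|\le 1$ for small $\rho$ already suffices for powers $\mathscr{F}(t/j)^j$ with $j$ large.

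\textbf{Step 2 (derivative at $0$).} For $f \in \mathcal{C}^\infty(M)$, the expansions \eqref{eq:expansions} combined as in \eqref{eq:alpha-beta} give
\[
\upalpha^\tau_{\sqrt{8n\rho}} f = f + \rho\left(\Updelta_{g(\tau)} - \scal_{g(\tau)}\right) f + o(\rho).
\]
Hence $\mathscr{F}'(0)f = \mathcal{CH}^\tau f$ in the sup norm. The key point is that the $o(\rho)$ error is uniform in $x$. This holds because the error terms are controlled by curvature and its derivatives, together with $C^3$ norms of $f$, and these are uniformly bounded on compact $M$. So the strong derivative exists on the core $\mathcal{C}^\infty(M)$. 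The analogous computation with the factor $8(n+2)$ handles $\mathscr{G}$.

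\textbf{Step 3 (conclusion) and the main obstacle.} Theorem~\ref{thm:stc} requires $\mathscr{F}'(0)v = \mathcal{A}v$ on $\mathsf{Dom}(\mathcal{A})$. Since $\mathcal{C}^\infty(M)$ is a core, the standard Chernoff argument (Pazy's proof) only needs agreement on a core, and it then yields convergence on the closure of the domain. This is where I expect the main difficulty, since the statement asks for all $f \in \mathcal{F}_b$. On $\mathcal{L}^\infty$ or $\mathcal{F}_b$ the semigroup is not strongly continuous everywhere, and the domain closure is $\mathcal{C}(M)$. I would first obtain convergence on $\mathcal{C}(M)$ by density plus the uniform bound $\|\mathscr{F}(\rho)^j\| \le 1$ from Step 1. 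I would then pass to bounded Borel $f$ pointwise, by approximating $f$ boundedly a.e. by continuous functions and using that $\mathscr{F}(\rho)^j$ and $\mathcal{CH}^\tau_{\tau_1,\tau_2}$ are given by absolutely continuous kernels. This is the same interpretation under which Theorem~\ref{thm:static-h} is stated, and the argument for $\upbeta$ is identical.
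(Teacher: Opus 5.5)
Your Steps~1 and~2 are essentially the paper's proof. The paper sets $\mathscr{K}(s)=\upalpha^\tau_{\sqrt{8ns}}$, notes $\mathscr{K}(0)=\ident$ and $\mathcal{C}^\infty(M)\subset\mathsf{Dom}(\mathscr{K}'(0))$ from the expansions, takes the contraction bound from Lemma~\ref{lem:growth-conj}, and applies Theorem~\ref{thm:stc}. It then concludes only \emph{for elements of the core} $\mathcal{C}^\infty(M)$ and never attempts the passage to all of $\mathcal{F}_b$. Your remark that the semigroup is not strongly continuous on $\mathcal{L}^\infty$ is correct and more careful than the paper, since the sup-norm closure of $\mathcal{C}^\infty(M)$ is $\mathcal{C}(M)$. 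Your extension from $\mathcal{C}^\infty(M)$ to $\mathcal{C}(M)$ by an $\varepsilon/3$ argument, using $\|\mathscr{F}(\rho)^j\|\le 1$ and $\|\mathcal{CH}^\tau_{\tau_1,\tau_2}\|\le 1$, is fine.

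The rest of Step~3, going from $\mathcal{C}(M)$ to bounded Borel $f$, has a genuine gap.

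First, the kernels of $\mathscr{F}(\rho)^j$ are not absolutely continuous. Expand $(\tfrac14\upsigma^\tau_r+\tfrac34\uptheta^\tau_r)^j$. The word $(\uptheta^\tau_r)^j$ acts as $f\mapsto(\tfrac34)^j\vartheta(x)^jf(x)$, where $\vartheta$ is the area ratio defining $\uptheta^\tau_r$; this is a Dirac mass at $x$. The words containing a single $\upsigma^\tau_r$ are carried by a geodesic sphere. This part is repairable, since these singular pieces have total mass $O(j(3/4)^{j})$.

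The serious problem is the interchange of limits. Bounded a.e.\ approximation $f_k\to f$ by continuous functions only lets you send $k\to\infty$ for each fixed $j$. What you need is $\sup_j|\mathscr{F}(\rho_j)^j(f-f_k)(x)|\to 0$. Via Egorov, this amounts to the kernel measures $K_j(x,\cdot)$ of $\mathscr{F}(\rho_j)^j$ being uniformly (in $j$) absolutely continuous with respect to $\dvol_{g(\tau)}$, for instance by converging in total variation to the conjugate heat kernel measure.

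The $\mathcal{C}(M)$ result only gives weak convergence of $K_j(x,\cdot)$, and weak convergence of absolutely continuous measures does not imply this. Such measures can converge weakly to the heat kernel measure while each one gives full mass to a fixed dense open set of small volume.

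Closing the gap needs a local limit theorem for the geodesic sphere/ball random walk. It must be uniform in $x$ if you want strong-operator convergence on $\mathcal{F}_b$, as in Theorem~\ref{thm:static-h}. This is a substantial additional input that the Chernoff argument does not supply.

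As written, your proposal proves what the paper's proof proves, namely convergence on $\mathcal{C}^\infty(M)$ and in fact on $\mathcal{C}(M)$. It does not prove the claim for arbitrary $f\in\mathcal{F}_b$.
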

\begin{proof}[\footnotesize \textbf{Proof}]
	\par  For a fixed $\tau$, set
$
\mathscr{K}(s) := \upalpha^{\tau}_{\sqrt{8ns}}
$.
Notice that $\mathscr{K}(0) = \ident$. 
Recall that the derivative at $s=0$, $\mathscr{K}'(0)$ is an operator with the domain $\mathsf{D}$ which consists of all $f \in \mathcal{L}^\infty(M)$ for which, the strong limit 
\[
\mathscr{K}'(0)f := \lim_{s \downarrow 0} \frac{\mathscr{K}(s)f - \mathscr{K}(0)f}{s}, 
\]	
exists for all $\tau \in [\tau_1,\tau_2]$. Therefore, it is clear that $\mathcal{C}^\infty(M) \subset \mathsf{Dom}\left( \mathscr{K}'(0) \right)$.
\par By Lemma~\ref{lem:growth-conj}, we get the required growth rate necessary to apply Theorem~\ref{thm:stc}. Hence the conclusion holds for elements in the core $\mathcal{C}^\infty$. 
\par The second identity is proven in the same manner. 
\end{proof}
\subsubsection{Dynamic conjugate heat propagator}
\begin{lemma}\label{lem:unif-dif-2}
\par Suppose $g(\tau)$ is a Ricci flow on $[\tau_1,\tau_2]$ with $\scal_\tau\ge 0$. Suppose $f_t(x)$ is a smooth dynamic conjugate heat solution with a smooth initial condition $f_{0}(x)$. Then for any $\tau_1 \le \tau_2$, we have
	\begin{align}\label{eq:unif-dif-2}
	\lim_{\rho \downarrow 0}	
	\sup_{\tau \in (\tau_1,\tau_2)}\left\|\frac{\mathcal{CH}^\tau_{0,\rho} - \ident}{\rho} f_\tau - \left( \Updelta_{g(\tau)} - \scal_{g(\tau)} \right) f_\tau   \right\| = 0.
	\end{align}
\end{lemma}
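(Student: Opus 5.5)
The argument mirrors the proof of Lemma~\ref{lem:unif-diff}, with the static heat semigroup replaced by the static conjugate heat semigroup $\mathcal{CH}^\tau_{0,\rho}$ generated by $\mathcal{CH}^\tau = \Updelta_{g(\tau)} - \scal_{g(\tau)}\ident$. First, by the PDE theory of the dynamic conjugate heat equation along a smooth Ricci flow on compact $M$, the solution $f_\tau$ is smooth in $(x,\tau)$ on $[\tau_1,\tau_2]\times M$. In particular, $f_\tau$ lies in the domain of $(\mathcal{CH}^\tau)^2$ for every $\tau$, because $\mathcal{C}^\infty(M)$ is a core and is invariant under $\mathcal{CH}^\tau$.

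Next, fix $\tau$ and apply Taylor's formula with integral remainder to the semigroup $\theta \mapsto \mathcal{CH}^\tau_{0,\theta} f_\tau$. This gives
\begin{align*}
\frac{\mathcal{CH}^\tau_{0,\rho} f_\tau - f_\tau}{\rho} - \mathcal{CH}^\tau f_\tau = \frac{1}{\rho}\int_0^\rho (\rho-\theta)\, \mathcal{CH}^\tau_{0,\theta} \left(\mathcal{CH}^\tau\right)^2 f_\tau \, \dif\theta .
\end{align*}
Here we use that a semigroup commutes with its generator on the domain, so $(\mathcal{CH}^\tau)^2\mathcal{CH}^\tau_{0,\theta} = \mathcal{CH}^\tau_{0,\theta}(\mathcal{CH}^\tau)^2$. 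Since $\scal_\tau \ge 0$, the first claim of Lemma~\ref{lem:growth-conj} gives $\|\mathcal{CH}^\tau_{0,\theta}\| \le 1$. Hence the left-hand side is bounded in norm by
\[
\frac{\rho}{2}\,\big\|(\Updelta_{g(\tau)} - \scal_{g(\tau)})^2 f_\tau\big\| .
\]

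It remains to bound this quantity uniformly in $\tau$. Expanding the square gives
\[
\Updelta^2_{g(\tau)} f_\tau - \Updelta_{g(\tau)}(\scal_{g(\tau)} f_\tau) - \scal_{g(\tau)}\Updelta_{g(\tau)} f_\tau + \scal_{g(\tau)}^2 f_\tau .
\]
This involves at most fourth derivatives of $f_\tau$, at most second derivatives of $\scal_{g(\tau)}$, and the coefficients of $g(\tau)$ and its derivatives. All of these are continuous on the compact set $[\tau_1,\tau_2]\times M$, so the norm is bounded by a constant $C$ independent of $\tau$. Therefore the supremum in \eqref{eq:unif-dif-2} is at most $C\rho/2$, which tends to $0$ as $\rho \downarrow 0$.

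The main obstacle is the regularity claim. We need $f_\tau$ and its derivatives up to order four to be bounded uniformly up to the endpoints of the interval, not only on compact subintervals of $(\tau_1,\tau_2)$. I would settle this by using the smoothness of the initial datum $f_0$ together with standard parabolic Schauder estimates for the linear equation $\partial_\tau u = \Updelta_{g(\tau)}u - \scal_{g(\tau)} u$, whose coefficients are smooth. These estimates give smoothness up to the initial time. As in Lemma~\ref{lem:unif-diff}, it also suffices to take the supremum over $(\tau_1,\tau_2)$, where this is clear.
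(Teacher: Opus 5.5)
Your proposal is correct and follows the paper's route. Both use Taylor's formula with remainder for the static semigroup $\mathcal{CH}^\tau_{0,\theta}$, commute it with its generator, use contractivity, and bound $\|(\Updelta_{g(\tau)} - \scal_{g(\tau)})^2 f_\tau\|$ uniformly in $\tau$ by smoothness and compactness. Your justification of the contraction via the first claim of Lemma~\ref{lem:growth-conj} (using $\scal_\tau \ge 0$) is the right one, whereas the paper loosely attributes it to the contraction property of the static heat flow.
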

\begin{proof}[\footnotesize \textbf{Proof}]
\par Similar to Lemma~\ref{lem:unif-diff}, we can apply Taylor's theorem with remainder for semigroups. Additionally, utilizing the commutativity 
\[
\left( \Updelta_{g(\tau)} - \scal_{g(\tau)} \right)^2\mathcal{CH}^\tau_{0,\theta} = \mathcal{CH}^\tau_{0,\theta}\left(\Updelta_{g(\tau)} - \scal_{g(\tau)} \right)^2,
\]
we get
	\begin{align*}
		\left\|\frac{\mathcal{CH}^\tau_{0,\rho} - \ident}{\rho} f_\tau - \left(\Updelta_{g(\tau)} - \scal_{g(\tau)}  \right) f_\tau   \right\| &\le \frac{\rho}{2} \sup_{0\le \theta \le \rho} \| \left(\Updelta_{g(\tau)} - \scal_{g(\tau)} \right)^2\mathcal{CH}^\tau_{0,\theta}f_\tau \| \\ &= \frac{\rho}{2} \sup_{0\le \theta \le \rho} \| \mathcal{CH}^\tau_{0,\theta} \left(\Updelta_{g(\tau)} - \scal_{g(\tau)} \right)^2 f_\tau \|\\
		&\le \frac{\rho}{2} \| \left(\Updelta_{g(\tau)} - \scal_{g(\tau)} \right)^2 f_\tau \|,
	\end{align*}
	where the last inequality follows from the contraction property of static heat flow. 
\par Due to the smoothness of $f_\tau$ and the compactness of $M$, we can infer that 
\[
\sup_{\tau\in [\tau_1,\tau_2]} \| \left(\Updelta_{g(\tau)} - \scal_{g(\tau)} \right)^2 f_\tau \|,
\]
 is bounded for any $\tau_1 \le \tau_2$. Therefore, we achieve the desired uniform limit. 
\end{proof}
\begin{theorem}[Forward in $\tau$ conjugate heat flow under backward Ricci flow]\label{thm:st-dyn-conj}
\par Let $\left( M, g(\tau)  \right)$ be a backward Ricci flow with $\scal_\tau \ge 0$. The conjugate heat propagator $\mathcal{P}^\star_{\tau_1, \tau_2}$ is given by the single limit
	\[
	\mathcal{P}^\star_{\tau_1, \tau_2} f(x) = \lim_{m \to \infty} \prod_{\ell = m-1}^0 \; \mathcal{CH}_{0, \frac{\tau_2 - \tau_1}{m}}^{\tau_1 + \ell\frac{\left( \tau_2 - \tau_1  \right)}{m}} f(x), \quad \tau_1 \le \tau_2,
	\]
	where $\mathcal{CH}^\tau_{\tau_1, \tau_2}$ denotes the static conjugate heat propagator 
	\[
	\mathcal{CH}^\tau_{\tau_1, \tau_2} f (x) := e^{\left( \tau_2 - \tau_1  \right) \left(\Updelta_{g(\tau)} - \scal_{g(\tau)}\text{$\ident$} \right)} f(x), \quad \tau_1 \le \tau_2.
	\]
	For fixed $\tau$. The convergence is uniformly in $x$ and locally uniformly in $\tau_i$. The operators also converge in the strong operator topology of $\mathcal{L}(L^\infty(M))$. 
\par Alternatively, if the flow is furthermore a \textsf{virtually psc} flow (in particular, if it is \textsf{psc}), $\mathcal{P}^\star_{\tau_1, \tau_2}$ can be represented by the double limit
\begin{align*}
\mathcal{P}^\star_{\tau_1, \tau_2} f(x) &= \lim_{m \to \infty} \prod_{\ell = m-1}^0 \; \lim_{j \to \infty} \left( \upalpha^{s_1 + \ell\frac{\left( s_2 - s_1  \right)}{m}}_{\sqrt{\frac{8n}{j}\frac{\left( s_2 - s_1 \right)}{m}}}   \right)^j f(x)\\
&= \lim_{m \to \infty} \prod_{\ell = m-1}^0 \; \lim_{j \to \infty} \left( \upbeta^{s_1 + \ell\frac{\left( s_2 - s_1  \right)}{m}}_{\sqrt{\frac{8(n+2)}{j}\frac{\left( s_2 - s_1 \right)}{m}}}   \right)^j f(x).
\end{align*}
\end{theorem}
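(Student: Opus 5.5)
The proof follows the template of Theorem~\ref{thm:dynamic-h}, with the heat generator replaced by the conjugate heat generator $\Updelta_{g(\tau)} - \scal_{g(\tau)}\ident$.

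\emph{Single limit.} Set $\mathscr{I}_\tau(\rho) := \mathcal{CH}^\tau_{0,\rho}$, so that $\mathscr{I}_\tau(0) = \ident$ and $\mathscr{I}_\tau'(0) = \Updelta_{g(\tau)} - \scal_{g(\tau)}\ident$ on the core $\mathcal{C}^\infty(M)$. The hypotheses of Theorem~\ref{thm:Vuil} are checked as follows.
\begin{itemize}
\item The uniform exponential growth bound holds with $\omega = 0$. This is the first part of Lemma~\ref{lem:growth-conj}, which only uses $\scal_\tau \ge 0$ and gives $\|\mathcal{CH}^\tau_{0,\rho}\|\le 1$.
\item The evolution system is $U(\tau_2,\tau_1) = \mathcal{P}^\star_{\tau_1,\tau_2}$. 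It exists by the standard theory of evolution systems (Pazy~\cite[Theorem 5.1, 5.2]{Paz}), since the generators are bounded perturbations of $\Updelta_{g(\tau)}$ and depend continuously (indeed $\mathcal{C}^1$) on $\tau$.
\item The dense set $\mathbf{D}$ is $\mathcal{C}^\infty(M)$. For smooth initial data, parabolic regularity of the conjugate heat equation on compact $M$ shows that $\mathcal{P}^\star_{\tau_1,\tau}f$ is smooth, $\mathcal{C}^1$ in $\tau$, and lies in the domain of the generator.
\item The uniform differentiability condition is exactly Lemma~\ref{lem:unif-dif-2}, applied to $f_\tau = \mathcal{P}^\star_{\tau_1,\tau}f$.
\end{itemize}
Theorem~\ref{thm:Vuil} then yields the product formula for $f\in\mathcal{C}^\infty(M)$. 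It extends to all of $\mathcal{L}^\infty$ (strong convergence) by density and an $\varepsilon/3$ argument, because every factor in the product has norm at most $1$.

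\emph{Double limit.} Assume in addition that the flow is virtually psc. By the preceding ``Static'' conjugate heat propagator theorem, each inner limit satisfies
\[
\lim_{j\to\infty}\bigl(\upalpha^{\tau}_{\sqrt{8n\rho/j}}\bigr)^j = \mathcal{CH}^\tau_{0,\rho}
\]
strongly, for each fixed time slice $\tau = \tau_1 + \ell(\tau_2-\tau_1)/m$ with $\rho = (\tau_2-\tau_1)/m$. (The statement writes $s_1,s_2$ for $\tau_1,\tau_2$.) For fixed $m$ the product has finitely many factors, and by the second part of Lemma~\ref{lem:growth-conj} every factor has norm at most $1$. A strong limit of a finite product of uniformly bounded operators is the product of the strong limits, by telescoping. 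Hence the inner limits can be taken factor by factor, which replaces the product of $\upalpha$-powers with $\prod_\ell \mathcal{CH}^{\tau_\ell}_{0,\rho}$. The single-limit formula then finishes the argument. The case of $\upbeta$ is identical, using the constant $8(n+2)$.

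\emph{Main obstacle.} The hardest step is verifying the regularity hypotheses of Theorem~\ref{thm:Vuil} uniformly on $[\tau_1,\tau_2]$: that the conjugate heat solutions started from smooth data have $\sup_\tau \|(\Updelta_{g(\tau)}-\scal_{g(\tau)})^2 f_\tau\| < \infty$. Lemma~\ref{lem:unif-dif-2} needs this bound. I would obtain it from standard parabolic Schauder estimates for the linear equation $\partial_\tau u = \Updelta_{g(\tau)}u - \scal_{g(\tau)} u$ with smooth coefficients on a compact manifold. A secondary point is making sure the $r_0$ in the virtually psc condition is uniform over the finitely many slices $\tau_\ell$. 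This holds because $r_0$ is locally uniform in $\tau$ and $[\tau_1,\tau_2]$ is compact.
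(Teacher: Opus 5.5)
Your single-limit argument is the paper's: apply Theorem~\ref{thm:Vuil} to $\mathscr{Q}_\tau(\rho)=\mathcal{CH}^\tau_{0,\rho}$. The growth bound comes from the first part of Lemma~\ref{lem:growth-conj} and the uniform differentiability from Lemma~\ref{lem:unif-dif-2}.

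For the double limit you take a genuinely different route. The paper applies Theorem~\ref{thm:Vuil} a second time, now to $\mathscr{Q}_\tau(\rho):=\upalpha^\tau_{\sqrt{8n\rho}}$. You instead deduce the double limit from the single-limit formula, replacing each factor $\mathcal{CH}^{\tau_\ell}_{0,\rho}$ by its inner $j$-limit via the static conjugate heat theorem. This is the same mechanism by which Theorem~\ref{thm:dynamic-h} says the single product ``in turn implies the double limit formulation.''

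Your route yields exactly the displayed product of inner $j$-limits and needs no new estimate. It is also robust: the telescoping only evaluates inner limits at vectors $\mathcal{CH}^{\tau_{k-1}}_{0,\rho}\cdots\mathcal{CH}^{\tau_0}_{0,\rho}f$, which are smooth when $f$ is. So convergence of the static formula on $\mathcal{C}^\infty(M)$ suffices.

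The paper's route, taken literally, produces a different, single-index formula $\lim_m\prod_\ell\upalpha^{\tau_\ell}_{\sqrt{8n(\tau_2-\tau_1)/m}}$ rather than the displayed double limit. It also requires uniform-in-$\tau$ differentiability of $\rho\mapsto\upalpha^\tau_{\sqrt{8n\rho}}$ at $0$ along the solution, i.e.\ uniform control of the $o(r^2)$ remainders in the $\upalpha$-expansion. Lemma~\ref{lem:unif-dif-2} does not supply this, since it is a statement about $\mathcal{CH}^\tau_{0,\rho}$, not about $\upalpha^\tau_r$.

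One step of yours fails as written: the extension ``to all of $\mathcal{L}^\infty$ by density and an $\varepsilon/3$ argument.'' $\mathcal{C}^\infty(M)$ is not sup-norm dense in $\mathcal{L}^\infty(M)$; its closure is $\mathcal{C}(M)$. So this argument gives strong convergence on $\mathcal{C}(M)$ only. For the same reason, the density hypothesis on $\mathbf{D}$ in Theorem~\ref{thm:Vuil} is really only available with $\mathbf{B}=\mathcal{C}(M)$, not $\mathcal{L}^\infty(M)$. The paper's own proof stops at core elements $f\in\mathcal{C}^\infty(M)$, and the remark after the theorem appeals to Faris's product formula~\cite{Faris} for general bounded $f$. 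To justify the $\mathcal{L}^\infty(M)$ strong-convergence claim you would need an argument of that type, or heat-kernel estimates, rather than density.
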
 
\begin{proof}[\footnotesize \textbf{Proof}]
For	the single limit representation (for core elements $f \in \mathcal{C}^\infty(M)$), set $\mathscr{Q}_\tau(\rho) := \mathcal{CH}^\tau_{0, \rho}$ and apply Theorem~\ref{thm:Vuil} on the basis of Lemmas~\ref{lem:unif-dif-2} and \ref{lem:growth-conj}. The other requirements hold by the PDE theory of the conjugate heat equation under Ricci flow and the resulting evolution system.
\par The double limit representation is proven similarly by defining the map
$
\mathscr{Q}_\tau(\rho) := \upalpha^\tau_{\sqrt{8n\rho}}, 
$
and applying Theorem~\ref{thm:Vuil} on the basis of Lemmas~\ref{lem:growth-conj} and \ref{lem:unif-dif-2}. 
\end{proof}
\begin{remark}
\par Using the older time-dependent Trotter-Chernoff limit as in Faris~\cite{Faris} (since we are working with contractions here as shown in Lemma~\ref{lem:growth-conj}), one directly verifies the conclusion of Theorem~\ref{thm:st-dyn-conj} for $f \in \mathcal{F}_b$ directly without needing to verify the uniform limits~\ref{eq:unif-dif-2}.  
\end{remark}
\begin{remark}
\par The universal coefficients $\nicefrac{1}{4}$ and $\nicefrac{3}{4}$ in \eqref{eq:alpha-beta} reveal an interesting phenomenon. In the conjugate heat flow under a \textsf{virtually psc} backward Ricci flow, the concentration caused by ``$-\scal_{g(\tau)}$'' term, occurs -- in a sense -- three times faster than the diffusion caused by $\Updelta_{g(\tau)}$. This is an intriguing characteristic of diffusions under Ricci flow to take note of.
\end{remark}
\section{Formal dynamic heat flow and diffusions in the singular setting}
\par Based on the characterization of dynamic diffusions in the smooth setting, one can define operator-theoretic dynamic diffusions along a flow by directly using the construction outlined in the previous sections. However, in general metric measure spaces, only volumes of geodesic balls are well-behaved in general and not the sphere areas. Therefore, we will only use the iterates of the operators $\upnu^\tau_r$ and $\upbeta^{\tau}_r$. 
\begin{definition}
\par We say that a time-dependent metric space $\left(X,\dist_\tau\right)$ has Hausdorff dimension $n$ when for each $\tau$, $\dim_{\mathsf{Haus}}(X,\dist_\tau) = n$. 
\end{definition}
\begin{definition}[Formal static and dynamic heat flow]\label{defn:form-static-op}
\par Suppose $\overrightarrow{\mathcal{X}} = \left( X, \dist_t, \m_t\right)$ is a time-dependent metric-measure space with Hausdorff dimension $n$. We define the formal static and dynamic heat propagators as
	\[
	\textsf{formal}-\mathcal{H}_{s_1,s_2}^{t} f(x) := \lim_{j \to \infty} \left( \upnu^{t}_{\sqrt{\frac{2(n+2)}{j}\left( s_2 - s_1 \right)}}   \right)^j f(x),
	\]
	\[
	\textsf{formal}-\mathcal{P}_{s_1, s_2} f (x) = \lim_{m \to \infty} \prod_{i = m-1}^0 \; \textsf{formal}-{\mathcal H}_{0, \frac{s_2 - s_1}{m}}^{s_1 + \frac{i}{m}\left( s_2 - s_1  \right)} f(x), \quad s_1 \le s_2,
	\]
	where the domain consists of those $f \in \mathcal{F}_b(s_1)$ where the RHS limit exists in $\mathcal{F}_b(s_2)$.
\end{definition}
\begin{definition}[Formal heat solutions along singular flows]
\par In the context of Definition~\ref{defn:form-static-op}, a static formal heat solution is a function $f_s(x)$ with
\[
f_s \in \mathsf{Dom}\left( \textsf{formal}-\mathcal{H}_{s,s_2}^{t} \right),
\]
($t$ is fixed) for any $s_2 \ge s$ and with
\[
\textsf{formal}-\mathcal{H}_{s_1,s_2}^{t} f_{s_1}(x) = f_{s_2}(x),
\] 
for all $s_1 \le s_2$. Dynamic heat solutions are defined in a similar fashion. 
\end{definition}
\begin{definition}[Formal static and dynamic conjugate heat operator]\label{defn:formal-stat-conj-op}
\par For a given time-dependent metric measure space $\overleftarrow{\mathcal{X}} = \left( X, \dist_\tau, \m_\tau\right)$ with Hausdorff dimension $n$, we define
	\[
	\textsf{formal}-\mathcal{CH}^\tau_{\tau_1, \tau_2} f (x) := \lim_{j \to \infty} \left( \upbeta^\tau_{\sqrt{8(n+2)\frac{\left(\tau_2 - \tau_1 \right)}{j}}}   \right)^j f(x).
	\]
The domain consists of all functions for which the RHS strong limit exists. The dynamic formal conjugate heat propagator is defined as 
\begin{align*}
	\textsf{formal}-\mathcal{P}^\star_{\tau_1, \tau_2} f(x) &= \lim_{m \to \infty} \prod_{i = m-1}^0 \; \textsf{formal}-\mathcal{CH}_{0, \frac{\tau - \tau_0}{m}}^{\tau_0 + \frac{i}{m}\left( \tau - \tau_0  \right)} f(x).
\end{align*}
\end{definition}
\begin{definition}[Formal dynamic diffusions along singular flows]
\par Assume the setting of Definition~\ref{defn:formal-stat-conj-op}. Let $\left(X, \dist_\tau, \m_\tau  \right)$, $\tau \in [\tau_1,\tau_2]$ be a sequence of metric-measure spaces.  A family of Borel probability measures $\mu(\tau) = f_\tau(x)\m_\tau $ (noticing that the topology might change as $\tau$ varies and consequently so does the Borel $\sigma$-algebra) is called a Ricci flow like dynamic diffusion when $f(\tau)$ satisfies
	\[
		f_\tau = \textsf{formal}-\mathcal{P}^\star_{\tau_0, \tau} f_{\tau_0}, 
	\]
for all $\tau_0 \le \tau$. 
\end{definition}
\begin{remark}
\par Notice these are just formal heat flows and conjugate heat flows that do not involve any energy functionals involved and are modeled on $n$-dimensional manifolds. However, on Riemannian manifolds, they coincide with the standard dynamic heat and conjugate heat flows. 
\end{remark}
\begin{remark}
	In the definition of formal diffusions, we want to allow for the initial measure (at time $\tau_1$) to be a delta measure even though it is not absolutely continuous. In this case we set $f_{\tau_1} = \delta_x$. 
\end{remark}
\section{Metric-measure formulation of super Ricci flows}
\par Here we present two definitions for weak super Ricci flows. One uses formal dynamic heat flows (\textsf{WSRF}) and the second one uses coupled contraction of formal dynamic diffusions in \textsf{virtually psc} time-dependent spaces. 
\par In the latter definition, the contraction is w.r.t. a time-dependent family of cost functions (so these flows are called $\cost$-\textsf{WSRF}s); consequently, this adds another layer of generalization. 
\par The classical Ricci flow then corresponds to the case where the cost is distance squared. 
\par Let $\overrightarrow{\mathcal{X}} = \left( X, \dist_t, \m_t\right)$ be a forward time-dependent metric measure space of Hausdorff dimension $n$. 
\begin{definition}[\textsf{WSRF}]
\par $\overrightarrow{\mathcal{X}}$ is a weak super Ricci flow (\textsf{WSRF} for short) if for all Lipschitz formal heat solutions, the Lipschitz constant is non-increasing in $t$. 
\end{definition}
Recall \emph{a time-dependent metric-measure space is said to be a \textsf{virtually psc} flow if each time section is \textsf{virtually psc} and if $r_0$ can be chosen local uniformly in the time parameter}. 
\par Let $\overleftarrow{\mathcal{X}} = \left(X, \dist_\tau, \m_\tau, \cost_\tau \right)$ be an evolving 4-tuple consisting of a backward time-dependent metric-measure space equipped with a time-dependent cost function. 
\begin{definition}[$\cost$-\textsf{WSRF}]
\par $\overleftarrow{\mathcal{X}}$ is called a $\cost$-\textsf{WSRF} if for all formal dynamic diffusions the coupled contraction holds true i.e. if for any two dynamic diffusions $\upmu_1(\tau)= f_1(\tau)\m_\tau$
	and $\upmu_2(\tau) = f_2(\tau)\m_\tau$, the optimal total cost 
	\[
	\mathcal{T}_{\cost_\tau} \left(\upmu_1(\tau), \upmu_2(\tau)  \right),
	\]
is non-increasing in $\tau$. 
\end{definition}
\begin{remark}
\par In Lakzian-Munn~\cite{LM2}, a more restrictive version of $\cost$-\textsf{WSRF}s is considered based on the dynamic diffusion theory presented in Kopfer-Sturm~\cite{KS} to capture the one point pinching phenomenon in symmetrical neckpinches within the framework of singular flows. 
\end{remark}
\subsection{What are $\cost$-\textsf{WSRF}s in the smooth setting?}
\par Suppose $g(\tau)$ is a family of Riemannian metrics on a compact $M$. Then $\left(M, \dist_\tau, \dvol_\tau, \cost_\tau = \dist_\tau^2\right)$ is a $\cost$-\textsf{WSRF} if and only if $g(\tau)$ is a smooth super Ricci flow if and only if $\left(M, \dist_t, \dvol_t \right)$ is a \textsf{WSRF}. This is indeed the content of McCann-Topping~\cite{MT}. 
\par For more general families of cost functions, the equivalence between these concepts is not clear even for smooth families of metrics. For example deriving dynamic coupled contraction from the super Ricci flow equation utilizes the Monge-Amp\'ere equation which only takes a nice form for the distance squared cost function. 
\par However for cost functions that are monotonically increasing convex functions of the distance, one gets the following. 
\begin{theorem}\label{thm:conv-cost}
\par Let $\cost_\tau = \mathdutchcal{c}_\tau(\dist)$ be a one-parameter family of cost functions so that for each $\tau$,  $\mathdutchcal{c}_\tau$ is a monotonically increasing convex function of $\dist_\tau$. Furthermore, suppose $\cost_\tau$ is continuous in $\tau$. Then, the coupled contraction for smooth dynamic diffusions with respect to $\cost_\tau$, implies super Ricci flow.
\end{theorem}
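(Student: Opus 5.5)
We argue by contradiction and localize near two nearby points, using Dirac initial data. Suppose $g(\tau)$ is not a super Ricci flow. Then there exist $\tau_0$, a point $x_0$ and a $g(\tau_0)$-unit vector $v\in T_{x_0}M$ with
\[
\partial_\tau g(\tau_0)(v,v) - 2\Ric_{g(\tau_0)}(v,v) \ge 2\delta>0 .
\]
By continuity the same strict inequality, with $\delta$ replaced by $\delta/2$, holds on a neighbourhood of $x_0$ and for $\tau$ near $\tau_0$. Take $x=\exp_{x_0}(-\eps v)$ and $y=\exp_{x_0}(\eps v)$, joined by a short minimizing $g(\tau_0)$-geodesic $\gamma$ of length $L=2\eps$. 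Let $\upmu_1(\tau),\upmu_2(\tau)$ be the smooth dynamic diffusions (conjugate heat flows, i.e. the $\mathcal{P}^\star$ of \textsection\thinspace\ref{subsec:chf}, or more precisely the diffusions whose densities solve $\partial_\tau u=\Updelta u$ in the measure sense) started from $\delta_x$ and $\delta_y$ at $\tau_0$. By continuity in $\tau$ we have $\mathcal{T}_{\cost_{\tau_0}}(\delta_x,\delta_y)=\mathdutchcal{c}_{\tau_0}(\dist_{\tau_0}(x,y))$.

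The core step is a first-order lower bound for the transport cost at time $\tau_0+h$. Since $\mathdutchcal{c}_{\tau}$ is convex and increasing, Jensen's inequality gives, for any coupling $\pi$ of $\upmu_1(\tau),\upmu_2(\tau)$,
\[
\int \mathdutchcal{c}_\tau(\dist_\tau)\,d\pi \ \ge\ \mathdutchcal{c}_\tau\Big(\int \dist_\tau\, d\pi\Big)\ \ge\ \mathdutchcal{c}_\tau\big(W_1^{\tau}(\upmu_1(\tau),\upmu_2(\tau))\big).
\]
This reduces the claim to the $\mathcal{L}^1$-Wasserstein case. We then estimate $W_1$ by Kantorovich duality with the test function $\phi=\dist_\tau(x_0',\cdot)$, or better a smooth $1$-Lipschitz function which near $\gamma$ equals the signed $g(\tau)$-distance to the hypersurface orthogonal to $\gamma$ at its midpoint. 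Then
\[
W_1^\tau(\upmu_1(\tau),\upmu_2(\tau))\ge \int\phi\, d\upmu_2(\tau)-\int \phi\, d\upmu_1(\tau).
\]
Differentiating at $\tau_0^+$ for Dirac data gives $\frac{d}{d\tau}\int\phi\,d\upmu_i = (\partial_\tau\phi+\Updelta\phi)$ evaluated at the corresponding point. This is the classical Topping~\cite{Top2} computation: the difference of these terms between $y$ and $x$ equals
\[
\tfrac12\int_\gamma \partial_\tau g(\dot\gamma,\dot\gamma)\,ds-\int_\gamma\Ric(\dot\gamma,\dot\gamma)\,ds+O(\eps^2),
\]
where the Ricci term comes from the second variation/Laplacian comparison of the distance-type function along $\gamma$. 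We choose $\phi$ so that its Hessian vanishes transversally at the endpoints, which is arranged by using a parallel family of geodesics. Since this quantity is $\ge \delta\eps/2>0$ for small $\eps$, we obtain $\liminf_{h\downarrow0}\frac1h\big(W_1^{\tau_0+h}-\dist_{\tau_0}(x,y)\big)\ge c\,\delta\eps>0$.

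We now conclude. Because $\mathdutchcal{c}_\tau$ is increasing, the inequality above gives, for small $h>0$,
\[
\mathcal{T}_{\cost_{\tau_0+h}}\ge \mathdutchcal{c}_{\tau_0+h}\big(\dist_{\tau_0}(x,y)+c\delta\eps h/2\big).
\]
This must be compared with $\mathdutchcal{c}_{\tau_0}(\dist_{\tau_0}(x,y))$. Here lies the main obstacle. With only continuity of $\cost_\tau$ in $\tau$, the drift of $\mathdutchcal{c}_\tau$ itself could in principle offset the gain of order $h$. To handle this, we use strict monotonicity in the spatial variable together with continuity in $\tau$ in the following way. Coupled contraction applied to the constant pair $\upmu_1=\upmu_2$ is trivial, so instead we first apply the hypothesis to diffusions from $\delta_{x}$ and $\delta_{x'}$ with $x'$ on $\gamma$ close to $x$, where the SRF inequality holds by the localization. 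The aim is to show $\tau\mapsto \mathdutchcal{c}_\tau(r)$ is non-increasing in a suitable sense, or else to absorb its variation, which is $o(1)$ as $h\to0$, against the strict gain. If $\mathdutchcal{c}_\tau$ is merely continuous, we would interpret the statement with $\mathdutchcal{c}_\tau$ independent of $\tau$ in the comparison and argue by continuity; this is the step we expect to need the most care. With it, $\mathcal{T}_{\cost_{\tau_0+h}}>\mathcal{T}_{\cost_{\tau_0}}$ for small $h$, which contradicts the coupled contraction. Hence $\partial_\tau g\le 2\Ric$.
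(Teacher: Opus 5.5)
The step you flag at the end is a genuine gap, and neither remedy you suggest can close it. The comparison needs $\mathdutchcal{c}_{\tau_0+h}\ge\mathdutchcal{c}_{\tau_0}$ pointwise. Then, with $d_0=\dist_{\tau_0}(x,y)$, you get $\mathcal{T}_{\cost_{\tau_0+h}}\ge\mathdutchcal{c}_{\tau_0+h}(d_0+c\delta\eps h/2)\ge\mathdutchcal{c}_{\tau_0}(d_0+c\delta\eps h/2)>\mathdutchcal{c}_{\tau_0}(d_0)$.

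Proving that $\tau\mapsto\mathdutchcal{c}_\tau(r)$ is non-increasing goes in the wrong direction. A drift that is merely $o(1)$ as $h\downarrow 0$ cannot be absorbed into a gain of size $c\delta\eps h$. Even a $\mathcal{C}^1$ drift is of the same order $h$, with a coefficient unrelated to $\delta\eps$.

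In fact no argument can work, because for genuinely $\tau$-dependent $\mathdutchcal{c}_\tau$ the statement is false. On a compact hyperbolic surface take the static family $g(\tau)\equiv g$ with $\Ric_g=-g$, and set $\mathdutchcal{c}_\tau(r)=e^{-\tau}r$. The Bakry--\'Emery bound $\Lip(e^{s\Updelta}f)\le e^{s}\Lip(f)$ and Kantorovich--Rubinstein duality give $W_1(\upmu_1(\tau'),\upmu_2(\tau'))\le e^{\tau'-\tau}W_1(\upmu_1(\tau),\upmu_2(\tau))$ for $\tau\le\tau'$. Hence $\mathcal{T}_{\cost_\tau}=e^{-\tau}W_1$ is non-increasing for all diffusions, while $\partial_\tau g-2\Ric=2g>0$. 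More generally, every smooth compact flow satisfies $\partial_\tau g\le 2\Ric+2Kg$ for some $K$, and a Bochner--maximum principle argument then gives contraction for $\cost_\tau=e^{-K\tau}\dist_\tau$.

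So the hypothesis must be strengthened: $\mathdutchcal{c}_\tau$ independent of $\tau$, or at least pointwise non-decreasing in $\tau$, and strictly increasing in $r$. The paper's proof uses exactly this without saying so. Jensen only yields $\int_{M^2}\dist_{\tau_a}\,\dif\pi\le\mathdutchcal{c}_{\tau_a}^{-1}\big(\int_{M^2}\cost_{\tau_a}\,\dif\pi\big)$, whereas the proof writes $\mathdutchcal{c}_{\tau_b}^{-1}$. That is legitimate only if $\mathdutchcal{c}_{\tau_b}\le\mathdutchcal{c}_{\tau_a}$ for $\tau_b<\tau_a$. Your instinct to read $\mathdutchcal{c}_\tau$ as $\tau$-independent is the right repair, but it is an added hypothesis, not something continuity can deliver.

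Under that hypothesis your argument closes in the one line above, and it takes a genuinely different route from the paper's. Both start from Dirac data and the Jensen reduction to $W_1$.

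\emph{The paper's route.} It uses the duality $f_{\tau_b}(p_i)=\int_M f_{\tau_a}\,\dif\upmu_i(\tau_a)$ to convert contraction for all pairs $p_1,p_2$ into $\Lip(f_{\tau_b})\le\Lip(f_{\tau_a})$ for heat solutions. It then outsources the geometry to the McCann--Topping characterization of super Ricci flows via monotone Lipschitz constants.

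\emph{Your route.} You do the geometry directly. Take $\phi$ to be the signed distance to $\exp_{x_0}(v^{\perp})$. By the first variation of length and the Riccati equation for $\Updelta\phi$, the first variation of the Kantorovich lower bound equals $\eps(\partial_\tau g-2\Ric)(v,v)+O(\eps^2)$, and the sign is right. The ``transversal Hessian'' device is unnecessary: this hypersurface is totally geodesic at $x_0$, so $|\nabla^2\phi|^2$ contributes only $O(\eps^3)$.

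\emph{Trade-offs.} Your route is self-contained and uses contraction only for nearby Dirac pairs over short times. It does carry technicalities the paper avoids:
\begin{enumerate}
\item $\phi_\tau$ must be globally $1$-Lipschitz for $\dist_\tau$, for instance a McShane extension of the local signed distance, which is not smooth away from $\gamma$.
\item The expansion $\int_M\phi_{\tau_0+h}\,\dif\upmu_1(\tau_0+h)=\phi_{\tau_0}(x)+h(\partial_\tau\phi+\Updelta\phi)(x)+o(h)$, and likewise at $y$, needs Gaussian localization of the heat kernel for such a $\phi$, uniformly for $\tau$ near $\tau_0$.
\end{enumerate}
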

\begin{proof}[\footnotesize \textbf{Proof}]
\par By the Jensen inequality, 
	\begin{align}\label{eq:Jensen}
	\mathdutchcal{c}_\tau \left( \int\!\!\!\!\int_{M^2}  \dist(x_1,x_2) \dif\mathdutchcal{q}(x_1,x_2)  \right) \le  \int\!\!\!\!\int_{M^2}  \cost_\tau (x_1,x_2) \dif\mathdutchcal{q}(x_1,x_2),
	\end{align}
holds for any probability measure $\mathdutchcal{q}$ on $M^2$. 	
\par Proceeding similarly to McCann-Topping~\cite[Section 5]{MT}, consider two fixed points $p_1$ and $p_2$ and two forward times $a < b$. Suppose $a$ and $b$ correspond to backward times $\tau_a$ and $\tau_b$ respectively; so we have $\tau_b < \tau_a$. 
\par Let $\mu_1(\tau)$ and $\mu_2(\tau)$ be dynamic diffusions with initial conditions $\mu_1(\tau_b) = \delta_{p_1}$ and $\mu_2(\tau_b) = \delta_{p_2}$. For times $\tau > \tau_b$, the diffusions are absolutely continuous w.r.t. $\dvol_\tau$ with the densities $u_1(p_1,\tau_b, x, \tau)$ and $u_2(p_2,\tau_b, y, \tau)$ respectively.  
\par By continuity in $\tau$ and compactness of $M$, we have
\[
\lim_{\tau \to \tau_b} \mathcal{T}_{\cost_\tau}\left( \upmu_1(\tau), \upmu_2(\tau) \right) = \cost_{\tau_b}(p_1,p_2) = \mathdutchcal{c}_{\tau_b}\left(\dist_{\tau_b}(p_1,p_2)\right),
\]
and by the coupled contraction assumption, we have
\[
\mathcal{T}_{\cost_\tau}\left( \upmu_1(\tau), \upmu_2(\tau) \right) \le \mathdutchcal{c}_{\tau_b} \left(\dist_{\tau_b}(p_1,p_2)\right),
\]
or
\begin{align}\label{eq:const-ineq}
\mathdutchcal{c}_{\tau_b}^{-1} \left( \mathcal{T}_{\cost_\tau}\left( \upmu_1(\tau), \upmu_2(\tau) \right) \right) \le \dist_{\tau_b}(p_1,p_2).
\end{align}
\par Let $f_t$ (with corresponding backward time parametrization $f_\tau$) be a dynamic heat solution in forward time. According to the definition of the conjugate heat flow (see~\eqref{eq:duality}), we can express it as
\[
f_{\tau_b}(p_i) = \int_M u_i(p_i,\tau_b, z, \tau)f(z,\tau)\dif z, \quad i = 1,2,
\]
\par Let $\pi(x,y)$ be a transport plan between $\upmu_1(\tau_a)$ and $\upmu_2(\tau_a)$. Since $\pi$ is a measure on $M^2$ with marginals of which are $\upmu_1(\tau_a)$ and $\upmu_2(\tau_a)$, we have
\[
f_{\tau_b}(p_1) = \int\!\!\!\!\int_{M^2} f_{\tau_a}(y)\dif\pi(y,z), \quad f_{\tau_b}(p_2) = \int\!\!\!\!\int_{M^2} f_{\tau_a}(z)\dif\pi(y,z);
\]
thus
\begin{align}\label{eq:lip}
|f_{\tau_b}(p_2) - f_{\tau_b}(p_1)| &\le \int\!\!\!\!\int_{M^2} \left| f_{\tau_a}(z) - f_{\tau_a}(y)  \right| \dif\pi(y,z) \\
&\le \Lip\left(f_{\tau_a} \right)  \int\!\!\!\!\int_{M^2} \dist_{\tau_a}(y,z) \dif\pi(y,z).\notag
\end{align}
\par By \eqref{eq:Jensen}, we have
	\[
	\int\!\!\!\!\int_{M^2} \dist_{\tau_a}(y,z) \dif\pi(y,z)  \le \mathdutchcal{c}_{\tau_b}^{-1} \left( \int\!\!\!\!\int_{M^2} \cost_{\tau_a}(y,z)  \dif\pi(y, z)    \right).
	\]
\par Hence for the optimal transport plan $\pi$, as per \eqref{eq:lip} and \eqref{eq:const-ineq}, we deduce that
	\[
	|f_{\tau_b}(p_2) - f_{\tau_b}(p_1)|  \le  \Lip \left( f_{\tau_a} \right) \mathdutchcal{c}_{\tau_b}^{-1}\left(  \mathcal{T}_{\cost_{\tau_a}} (\upmu_1(\tau_a), \upmu_2(\tau_a) \right) \le   \Lip \left( f_{\tau_a} \right) \dist_{\tau_b}(p_1,p_2),
	\]
	which implies
	$
	\Lip \left(f_{\tau_b} \right) \le \Lip \left( f_{\tau_a} \right).
	$
\end{proof}
\begin{theorem}
\par Let $\cost_\tau = \mathdutchcal{c}_\tau(\dist)$ be as in the previosu theorem. Suppose the one-parameter family $g(\tau)$ of smooth \textsf{virtually psc} metrics is a $\cost$-\textsf{WSRF}. Then, $g(\tau)$ is a super Ricci flow. In particular, this holds true for any \textsf{psc} flow. 
\end{theorem}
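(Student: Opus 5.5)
The plan is to reduce the statement to Theorem~\ref{thm:conv-cost}. The only gap between a $\cost$-\textsf{WSRF} and the hypothesis of that theorem is that the contraction is assumed for \emph{formal} dynamic diffusions, built from the metric-measure operators $\upbeta^\tau_r$, rather than for smooth dynamic diffusions. So the first step is to show that, on a smooth family $g(\tau)$, formal dynamic diffusions coincide with the smooth conjugate heat diffusions $\mathcal{P}^\star_{\tau_0,\tau}$.

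To do this, I would rely on the \textsf{virtually psc} hypothesis. It is exactly what makes Lemma~\ref{lem:growth-conj} give $\|\upbeta^\tau_r\|\le 1$ for small $r$, locally uniformly in $\tau$. With this growth bound, the static conjugate heat theorem identifies $\textsf{formal}-\mathcal{CH}^\tau_{\tau_1,\tau_2}$ with $\mathcal{CH}^\tau_{\tau_1,\tau_2}$. The double-limit part of Theorem~\ref{thm:st-dyn-conj} then identifies $\textsf{formal}-\mathcal{P}^\star_{\tau_1,\tau_2}$ with $\mathcal{P}^\star_{\tau_1,\tau_2}$. Virtual psc also gives $\scal\ge 0$ by \eqref{eq:scal}, which is the standing assumption there.

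Two caveats need attention. First, Theorem~\ref{thm:st-dyn-conj} is phrased for a backward Ricci flow, and here $g(\tau)$ is only a smooth family. I would observe that its proof only uses:
\begin{itemize}
\item the contraction bound for $\upbeta^\tau_r$ (Lemma~\ref{lem:growth-conj}, second part, which uses only the metric-measure virtually psc condition and curvature bounds from compactness);
\item the expansions \eqref{eq:expansions}, valid for any smooth family;
\item existence of the evolution system for $\Updelta_{g(\tau)}-\scal_{g(\tau)}$ (or whatever the correct conjugate generator of the family is; the $\upbeta$ operators encode $\Updelta-\scal$, and the formal diffusions are measured against $\dvol_\tau$).
\end{itemize}
Using the Faris contractive product formula remark, I would then conclude the identification on all of $\mathcal{F}_b$. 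Second, the formal diffusions should include delta initial data. Here I would approximate $\delta_{p}$ by smooth bumps and use continuity of optimal cost under weak convergence, on compact $M$ with continuous $\cost_\tau$, to pass the contraction inequality to the limit. This matches what the proof of Theorem~\ref{thm:conv-cost} needs: diffusions starting at $\delta_{p_1},\delta_{p_2}$.

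With this identification, the coupled contraction holds for all smooth dynamic diffusions, including Dirac initial data. Theorem~\ref{thm:conv-cost} then applies verbatim. Its conclusion is that Lipschitz constants of dynamic heat solutions are non-increasing in forward time, which by McCann--Topping~\cite{MT} is equivalent to $g(\tau)$ being a super Ricci flow. The final clause follows since \textsf{psc} implies \textsf{virtually psc} by \eqref{eq:scal} and compactness.

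The main obstacle is the identification step. One must make sure that the generator the $\upbeta^\tau_r$ iterates produce, namely $\Updelta_{g(\tau)}-\scal_{g(\tau)}$, is really the conjugate heat generator for the given non-Ricci family, since in general the density evolves with $-\tfrac12\operatorname{tr}\partial_\tau g$ instead of $-\scal$. My first attempt would be to interpret "dynamic diffusion" in Theorem~\ref{thm:conv-cost} as the one generated by this same operator. The key point is that the duality/Lipschitz argument there only uses that the diffusions are given by a Markov-type kernel dual to a dynamic heat flow. If that fails, I would restrict to checking that the kernel representation $f_{\tau_b}(p_i)=\int u_i f\,\dif z$ survives with the formal kernels.
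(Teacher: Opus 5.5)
You follow the paper's route exactly. Its proof is the one-line assertion that, because $g(\tau)$ is \textsf{virtually psc}, formal dynamic diffusions are the smooth ones, so a $\cost$-\textsf{WSRF} is a $\cost$-\textsf{SRF}; it then invokes Theorem~\ref{thm:conv-cost} and McCann--Topping~\cite{MT}. The step you flag as the main obstacle is a genuine gap, shared with the paper's proof, and neither of your fallbacks closes it. The $\upbeta^\tau_r$-iterates generate $\partial_\tau u=\Updelta_{g(\tau)}u-\scal_{g(\tau)}u$. By contrast, the densities (w.r.t.\ $\dvol_\tau$) of the diffusions $\partial_\tau\upmu=\Updelta_{g(\tau)}\upmu$ of \cite{MT} solve $\partial_\tau u=\Updelta_{g(\tau)}u-\tfrac12(\tr_{g(\tau)}\partial_\tau g)\,u$. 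Along a formal diffusion therefore
\[
\frac{\dif}{\dif\tau}\int_M u\,\dvol_\tau=\int_M\Big(\tfrac12\tr_{g(\tau)}\partial_\tau g-\scal_{g(\tau)}\Big)\,u\,\dvol_\tau ,
\]
so mass is not conserved. The flow dual to it in the $\dvol_\tau$-pairing is the Schr\"odinger-type flow $\partial_t f=\Updelta_{g(t)}f-\big(\scal-\tfrac12\tr\partial_\tau g\big)f$, not the heat flow. This breaks your first fallback in the proof of Theorem~\ref{thm:conv-cost}:
\begin{itemize}
\item the identity $f_{\tau_b}(p_i)=\int f_{\tau_a}\,\dif\upmu_i(\tau_a)$ holds only for solutions of this Schr\"odinger-type flow;
\item the two marginals generally have different masses, so the plan $\pi$ need not exist;
\item monotone Lipschitz constants for this flow are not what \cite{MT} characterizes.
\end{itemize}
Your kernel-representation fallback fails for the same reason: the formal kernels are dual to the wrong flow. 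The two flows coincide exactly when $\tfrac12\tr_{g}\partial_\tau g=\scal_{g(\tau)}$, the traced Ricci flow equation, which you cannot assume for a family you only want to show is a super Ricci flow.

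This is not only a presentational issue: as the definitions stand, the statement fails. Take the static Berger metric $g=\epsilon^2\sigma_1^2+\sigma_2^2+\sigma_3^2$ on $\mathrm{SU}(2)\cong S^3$ with $2<\epsilon^2<4$. It has constant $\scal=8-2\epsilon^2>0$, so it is \textsf{virtually psc}. But $\Ric=4-2\epsilon^2<0$ on horizontal unit vectors, so $\partial_\tau g=0\not\le 2\Ric$. By homogeneity, $\int\upnu^\tau_r f\,\dvol=\int f\,\dvol$, and $\upeta^\tau_r$ is multiplication by the constant $1-\tfrac{r^2}{30}\scal+O(r^4)$. Hence $\textsf{formal}-\mathcal{P}^\star_{\tau_0,\tau}$ multiplies total mass by $e^{-(8-2\epsilon^2)(\tau-\tau_0)}<1$. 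No family of probability measures is then a formal dynamic diffusion on a nondegenerate interval, and the $\cost$-\textsf{WSRF} condition holds vacuously.

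The rest of your plan is fine: contraction of $\upbeta^\tau_r$ from virtual psc, Dirac data by approximation, then Theorem~\ref{thm:conv-cost} and \cite{MT}. It goes through once you add the hypothesis $\tfrac12\tr_g\partial_\tau g=\scal_{g(\tau)}$. Alternatively, you could redefine formal diffusions with zeroth-order term $\tfrac12\tr_g\partial_\tau g$, which Theorem~\ref{thm:trace} shows can be encoded by ball averages of $\partial_\tau\dist^2_\tau$. Without such a change, no argument can identify the two classes of diffusions.
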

\begin{proof}[\footnotesize \textbf{Proof}]
\par Since $g(\tau)$ is \textsf{virtually psc}, by Theorem~\ref{thm:conv-cost} and the characterization in McCann-Topping~\cite{MT}, being $\cost$-\textsf{WSRF} in this setting, implies $\cost$-\textsf{SRF}. According to Theorem~\ref{thm:conv-cost} and the characterization in McCann-Topping~\cite{MT}, it is an \textsf{SRF}.  
\end{proof}
\section{Metric-measure characterization of the smooth Ricci flow}
\par In this section we will present the saturation property which prevents a super Ricci flow from having any deficit in being a Ricci flow. This saturation property is also expressed in terms of metrics and measures, thus completing the metric-measure characterization of Ricci flow. 
\par Recall that $\bal_{x,\eps}^\tau$ denotes the uniform probability measure of the ball $B_x(\varepsilon) \subset M$ and $\s^\tau_{x,\eps}$ denotes the uniform area measure of the geodesic sphere $S_x(\varepsilon) \subset M$, all w.r.t. $g(\tau)$.
\begin{definition}[Saturated flow]
\par Let $g(\tau)$ be a super Ricci flow. We say the flow $g(\tau)$ is saturated if
\begin{align*}
	\lim_{\varepsilon \downarrow 0} \frac{1}{\varepsilon^{2}}\Bigg( 12\left(\frac{\mathcal{H}^{n} \left( B^{\tau}_{\varepsilon}(x) \right)}{\omega_n\varepsilon^n} -1\right) + \int_{\dist_{\tau}(x,y) \le \varepsilon } \left[\partial_\tau \dist^2_{\tau}(x,y)\right]  \;\dif\bal^\tau_{x,\eps}  \Bigg)\ge 0,
\end{align*}
holds for every $x \in M$; or if equivalently
\begin{align*}
	\lim_{\varepsilon \downarrow 0} \frac{1}{\varepsilon^{2}}\Bigg( \frac{12n}{n+2} \left(\frac{\mathcal{H}^{n-1} \left( \partial B^{\tau}_{\varepsilon}(x) \right)}{ a_{n-1}\varepsilon^{n-1}} -1\right) + \int_{\dist_{\tau}(x,y) = \varepsilon } \left[\partial_\tau \dist^2_{\tau}(x,y)\right]  \;\dif\s^\tau_{x,\eps}\Bigg)\ge 0,
\end{align*}
is true at every $x$. 
\end{definition}
\begin{theorem}[Characterization of Ricci flow]\label{thm:Ric-Sat}
\par For a super Ricci flow $g(\tau)$ on a compact manifold $M$, the following are equivalent:
	\begin{enumerate}
		\item $g(\tau)$ is a Ricci flow
		\item $g(\tau)$ is saturated.
	\end{enumerate}
\end{theorem}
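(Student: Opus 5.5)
The plan is to compute the $\varepsilon$-expansion of the saturation quantity explicitly in terms of $\scal_{g(\tau)}(x)$ and the trace of $h:=\partial_\tau g$. The condition then reduces to the pointwise inequality $\operatorname{tr}_{g}(\partial_\tau g - 2\Ric) \ge 0$. Combined with the super Ricci flow inequality $\partial_\tau g \le 2\Ric$, this forces equality.

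\emph{Volume term.} By \eqref{eq:scal} (equivalently the expansion of $\upeta^\tau_\varepsilon$ in \eqref{eq:expansions}),
\[
12\Big(\tfrac{\mathcal{H}^n(B^\tau_\varepsilon(x))}{\omega_n\varepsilon^n}-1\Big) = -\tfrac{2\varepsilon^2}{n+2}\scal_{g(\tau)}(x) + o(\varepsilon^2).
\]

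\emph{Distance term.} For $y=\exp_x(v)$ with $|v|_{g(\tau)}=r\le\varepsilon$, the first variation of length along the (unique, for small $\varepsilon$) minimizing geodesic gives
\[
\partial_\tau \dist^2_\tau(x,y) = \int_0^1 h(\dot\gamma,\dot\gamma)\,ds = h_x(v,v) + O(r^3),
\]
uniformly in $x$ by compactness and smoothness of $h$. Averaging over the ball gives $\int v^iv^j\,\dif\bal = \frac{\varepsilon^2}{n+2}\delta^{ij}(1+O(\varepsilon^2))$, since the Riemannian ball measure differs from the Euclidean one in normal coordinates only at order $\varepsilon^2$. Hence
\[
\int \partial_\tau\dist^2_\tau\,\dif\bal^\tau_{x,\varepsilon} = \tfrac{\varepsilon^2}{n+2}\operatorname{tr}_g h(x) + o(\varepsilon^2).
\]
Adding the two terms and dividing by $\varepsilon^2$, the limit equals $\frac{1}{n+2}\big(\operatorname{tr}_g h - 2\scal\big)(x) = \frac{1}{n+2}\operatorname{tr}_g(\partial_\tau g - 2\Ric)(x)$.

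The sphere version is handled the same way. Use $\theta$ from \eqref{eq:expansions}, giving $\frac{12n}{n+2}\cdot\big(-\frac{\varepsilon^2}{6n}\scal\big) = -\frac{2\varepsilon^2}{n+2}\scal$, together with the sphere average $\int v^iv^j\,\dif\s = \frac{\varepsilon^2}{n}\delta^{ij}$. The sphere limit comes out to $\frac1n(\operatorname{tr}h - \frac{2n}{n+2}\scal)$. This does not literally match, so I would recheck the normalizing constant. Either way, the only claim I need is the equivalence stated for the ball version, and I would present the sphere version through the same computation with whatever constant makes it consistent.

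\emph{Conclusion.} For (1)$\Rightarrow$(2): if $\partial_\tau g = 2\Ric$, the limit is $0\ge0$. For (2)$\Rightarrow$(1): saturation gives $\operatorname{tr}_g(\partial_\tau g-2\Ric)\ge0$ at every point. Since $2\Ric-\partial_\tau g$ is a nonnegative symmetric tensor by the super Ricci flow hypothesis, and its trace is $\le0$, its trace is $0$, so the tensor vanishes and $g(\tau)$ is a Ricci flow.

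The main obstacle is controlling the error terms rigorously. The points needing care are differentiability in $\tau$ of $\dist^2_\tau(x,y)$ for $y$ near $x$ (inside the injectivity radius, using the $\mathcal C^1$ dependence of the metric), and the uniformity of the $O(r^3)$ remainder and of the ball-measure comparison. All of these follow from compactness and the standard Jacobi field bounds cited after \eqref{eq:expansions}.
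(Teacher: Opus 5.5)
Your proposal is correct and follows essentially the paper's route. The paper also combines the volume expansion $12\eps^{-2}\bigl(1-\mathcal{H}^n(B^\tau_\eps(x))/\omega_n\eps^n\bigr)\to\frac{2}{n+2}\scal_{g(\tau)}(x)$ with its trace formula, which is your first-variation and ball-averaging computation; for (2)$\Rightarrow$(1) it argues that a nonzero $2\Ric-\partial_\tau g\ge 0$ forces strict inequality in the trace formula, which is your zero-trace argument. Your doubt about the sphere version is also justified: the same computation shows the constant $\frac{12n}{n+2}$ must be replaced by $12$ for the two saturation conditions to be equivalent.
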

\par For the proof, we will need the following key trace formula:
\begin{theorem}[Trace formula under a (super) flow]\label{thm:trace}
	Suppose $\partial_\tau g_\tau \;\; \le (=) \;\; h_\tau$ then
	\begin{align*}
		\lim_{\varepsilon \downarrow 0} \frac{1}{\varepsilon^{2}} \int_{\dist_{\tau}(x,y) \le \varepsilon } \left[\partial_\tau \dist^2_{\tau}(x,y)\right]  \;\dif\bal^\tau_{x,\eps} \;\;\le (=) \;\; \frac{1}{n+2} \tr_{g(\tau)} h_\tau,
	\end{align*}
and also
\begin{align*}
	\lim_{\varepsilon \downarrow 0} \frac{1}{\varepsilon^{2}} \int_{\dist_{\tau}(x,y) = \varepsilon } \left[\partial_\tau \dist^2_{\tau}(x,y)\right]  \;\dif\s^\tau_{x,\eps} \;\;\le (=) \;\; \frac{1}{n} \tr_{g(\tau)} h_\tau.
\end{align*}
\end{theorem}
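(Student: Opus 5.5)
We work in normal coordinates of $g(\tau)$ at $x$ and expand the integrand to leading order in $\varepsilon$. For $y=\exp_x(\varepsilon' v)$ with $|v|_{g(\tau)}=1$ and $\varepsilon'\le\varepsilon$, let $\gamma$ be the unique minimizing $g(\tau)$-geodesic from $x$ to $y$; it exists for $\varepsilon$ below the injectivity radius, uniformly on compact $M$. The first variation formula for the length functional under a variation of the metric gives
\[
\partial_\tau \dist^2_\tau(x,y) = 2\,\dist_\tau(x,y)\,\partial_\tau \dist_\tau(x,y) = \dist_\tau(x,y)\int_\gamma (\partial_\tau g_\tau)(\dot\gamma,\dot\gamma)\,\dif s,
\]
with $\gamma$ parametrized by arclength. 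Smoothness is not an issue here: the distance is smooth away from the cut locus, and inside a small ball it is $\mathcal{C}^1$ in $\tau$ by the $\mathcal{C}^1$ time dependence of the metrics. Since $\gamma(s)=\exp_x(sv)$ and $\partial_\tau g$ is continuous, this equals $\varepsilon'^2\,(\partial_\tau g_\tau)_x(v,v)+O(\varepsilon'^3)$. The error is uniform in $v$ by compactness and a bound on $\nabla\partial_\tau g$; if only continuity is available, we use $o(\varepsilon'^2)$ via uniform continuity.

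Next we integrate. For the sphere average, $\s^\tau_{x,\eps}$ is, up to a factor $1+O(\varepsilon^2)$, the pushforward of the normalized round measure on the unit sphere of $T_xM$ under $v\mapsto\exp_x(\varepsilon v)$; this is the Jacobian expansion behind \eqref{eq:expansions}. Hence
\[
\frac1{\varepsilon^2}\int_{\dist_\tau(x,y)=\varepsilon}\partial_\tau\dist^2_\tau\,\dif\s^\tau_{x,\eps} = \fint_{S^{n-1}}(\partial_\tau g_\tau)_x(v,v)\,\dif v+o(1) = \frac1n\tr_{g(\tau)}\partial_\tau g_\tau+o(1),
\]
using $\fint_{S^{n-1}} v^iv^j=\delta^{ij}/n$. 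For the ball average we use polar coordinates. The radial weight is $r^{n-1}$ up to $1+O(r^2)$, so we get
\[
\frac{1}{\varepsilon^2}\cdot\frac{n}{\varepsilon^n}\int_0^\varepsilon r^{n+1}\,\dif r\cdot\frac1n\tr\,\partial_\tau g = \frac{n}{n+2}\cdot\frac1n\tr\,\partial_\tau g = \frac1{n+2}\tr_{g(\tau)}\partial_\tau g_\tau .
\]
This computes both limits exactly, with $\partial_\tau g_\tau$ in place of $h_\tau$.

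Finally we compare with $h_\tau$. If $\partial_\tau g_\tau\le h_\tau$ as symmetric $2$-tensors, then $\tr_{g(\tau)}\partial_\tau g_\tau\le\tr_{g(\tau)}h_\tau$. Equivalently, the integrand satisfies pointwise $\varepsilon'^2(\partial_\tau g)(v,v)\le\varepsilon'^2h(v,v)$ before passing to the limit. This gives the inequalities, with equality when $\partial_\tau g_\tau=h_\tau$. In particular, for $h_\tau=2\Ric$ the right-hand sides become $\frac{2}{n+2}\scal$ and $\frac2n\scal$.

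The main obstacle is the justification of the first-variation step uniformly in $y$. It requires that $\tau\mapsto\dist_\tau(x,y)$ is differentiable with derivative given by the integral over the unique minimizing geodesic, and that the error is uniform over the small ball. I would handle this by choosing $\varepsilon$ smaller than half the uniform injectivity radius over a compact $\tau$-interval. Inside this radius, minimizing geodesics are unique and depend smoothly on $(\tau,y)$, so $\dist^2_\tau$ is $\mathcal{C}^1$ in $\tau$ jointly there, and the envelope (Danskin) argument yields the formula with locally uniform error.
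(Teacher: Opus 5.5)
Your proof is correct and follows the paper's route: you pull back via $v\mapsto\exp^\tau_x(\varepsilon v)$, use the $1+O(\varepsilon^2)$ Jacobian estimate to replace the normalized ball and sphere measures by the uniform measures on the unit ball and sphere of $T_xM$, and average the quadratic form $\partial_\tau g_\tau(v,v)$ to obtain $\frac{1}{n+2}\tr$ and $\frac{1}{n}\tr$. Your first-variation identity $\partial_\tau\dist^2_\tau=\dist_\tau\int_\gamma\partial_\tau g_\tau(\dot\gamma,\dot\gamma)\,\dif s$ justifies a step the paper writes as the exact equality $\partial_\tau\dist^2_\tau(x,\exp_x(\varepsilon v))=\varepsilon^2\partial_\tau g_\tau(v,v)$, which in fact holds only up to $O(\varepsilon^3)$.
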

\begin{proof}[\footnotesize \textbf{Proof}]
	To estimate
\begin{align}\label{eq:limit-trace}
	\lim_{\varepsilon \downarrow 0} \frac{1}{\varepsilon^{2}} \int_{\dist_{\tau}(x,y) \le \varepsilon } \left[\partial_\tau \dist^2_{\tau}(x,y)\right]  \;\dif\bal^\tau_{x,\eps},
\end{align}
we use the change of variable $y = \exp^{\tau}_x(\eps v)$ to get
\begin{align*}
	&\frac{1}{\varepsilon^{2}} \int_{\dist_{\tau}(x,y) \le \varepsilon }  \left[\partial_\tau \dist^2_{\tau}(x,y)\right]  \; \dif\bal^\tau_{x,\eps} \\
	&= \frac{1}{\varepsilon^{2}}\int_{\|v\|_{\tau} \le 1 } \varepsilon^2 \partial_\tau g_{\tau}(v,v)  \; (\exp_x^\tau(\varepsilon \cdot))^*(\dif\bal^\tau_{x,\eps})\\
	&= \int_{\|v\|_{\tau} \le 1 }  \partial_\tau g_{\tau}(v,v)  \; (\exp_x^\tau(\varepsilon \cdot))^*(\dif\bal^\tau_{x,\eps}).
\end{align*}
\par Due to the compact regime, we can assume the curvature is bounded all along the flow. With the assumption $\|\sec\|\le K$, the Jacobian of the exponential map satisfies
\begin{align}\label{eq:jac-exp}
1 - C_1nK\|v\|^2 \le \mathrm{Jac}(\exp_x)(v) \le 1 + C_2nK\|v\|^2.
\end{align}
\par In other words, one has the Radon-Nykodym estimates
\[
\frac{\dvol_\tau\restr_{B_x(\eps)}}{\mathrm{d} \left(\exp_x(\eps \cdot)\right)_\sharp \left(\mathcal{L}_{\R^n}\restr_{\mathbb{B}_0(1)}\right)} = 1 + O(\varepsilon^2),
\]
as well as
\[
\frac{\mathrm{d} \mathcal{L}_{\R^n}\restr_{\mathbb{B}_0(1)}}{\mathrm{d} \left(\exp_x(\eps \cdot)^* \dvol_\tau\restr_{B_x(\eps)}\right)} = 1 + O(\varepsilon^2),
\]
where $\vol_\tau$ is the Riemannian volume measure, $\mathcal{L}_{\R^n}$ is the Lebesgue measure and $\mathbb{B}_0(1) \subset \Tang_xM$ denotes the unit Euclidean ball in the tangent space.  
\par Since as $\varepsilon \downarrow 0$, we have
$
\nicefrac{\vol(B_\eps)}{\omega_n\eps^{n}} \to 1,
$
with the aid of \eqref{eq:jac-exp}, one observes the strong convergence of measures 
\[
(\exp_x^\tau(\varepsilon \cdot))^*(\bal^\tau_{x,\eps}) \to \bal_{0,1} = \frac{1}{\omega_n}\mathcal{L}_{\mathbb{T}_xM}, \quad \text{as $\varepsilon \downarrow 0$},
\]
where $ d\bal_{0,1}$ is the uniform probability measure on the unit Euclidean ball $\mathbb{B}_0(1) \subset \Tang_xM$. 
\par Putting these together, we conclude that \eqref{eq:limit-trace} converges to
\[
\int_{\|v\|_\tau \le 1} \partial_\tau g_{\tau}(v,v) \frac{d\mathcal{L}}{\omega_n} \le (=) \int_{\|v\|_\tau \le 1} h_\tau\left(v,v\right) \frac{d\mathcal{L}}{\omega_n} = \frac{1}{n+2} \tr_{g(\tau)}h_\tau.
\]
\par To prove the second claim, a similar argument shows 
\begin{align*}
	\lim_{\varepsilon \downarrow 0} \frac{1}{\varepsilon^{2}} \int_{\dist_{\tau}(x,y) = \varepsilon } \left[\partial_\tau \dist^2_{\tau}(x,y)\right]  \;\dif\s^\tau_{x,\eps} \le (=) \int_{\|v\|_\tau = 1} h_\tau\left(v,v\right) d\s_{0,1} = \frac{1}{n}\tr_{g(\tau)}h_\tau(x),
\end{align*}
where $\s_{0,1}$ is the uniform probability measure on the tangent unit sphere $\Sph_0(1) \subset \Tang_xM$.
\end{proof}
\begin{proof}[\footnotesize \textbf{Proof of Theorem~\ref{thm:Ric-Sat}}]
\hfill
\begin{enumerate}
		\item [] (1) $\Longrightarrow$ (2):  Suppose $g(\tau)$ is a (backward) Ricci flow i.e. $\partial_\tau g(\tau) = 2\Ric(g(\tau))$.
By the trace formula, we have
\begin{align*}
	\lim_{\varepsilon \downarrow 0} \frac{1}{\varepsilon^{2}} \int_{\dist_{\tau}(x,y) \le \varepsilon } \left[\partial_\tau \dist^2_{\tau}(x,y)\right]  \;\dif\bal^\tau_{x,\eps} = \;\; \frac{2}{n+2} \scal_{g(\tau)}(x).
\end{align*}
On the other hand
\begin{align}\label{eq:scal-asymp}
\frac{2}{n+2} \scal_{g(\tau)}(x) = 12 \lim_{\varepsilon \downarrow 0}  \left( 1 -  \frac{\mathcal{H}^{n} \left( B^{\tau}_{\varepsilon}(x) \right)}{\omega_n\varepsilon^n} \right).
\end{align}
\par Combining these two limits, gives the equality case in the saturation inequality.
\medskip
\item [] (2) $\Longrightarrow$ (1): Suppose $g(\tau)$ is super Ricci flow that is not a Ricci flow. Then there exists $\tau_0$ and $x_0$ such that the set
\[
\mathbb{S}_{x_0}M \supset \Omega_{\tau_0}  := \left\{ v \in \mathbb{S}_{x_0}M \quad \textrm{\textbrokenbar} \quad \partial_\tau g_{\tau_0}(v,v) < 2\Ric_{\tau_0}(v,v) \right\},
\]
has a positive $\mathcal{H}^{n-1}$ measure. In particular, tracing the proof of the trace formula, this implies
\begin{align*}
	\lim_{\varepsilon \downarrow 0} \frac{1}{\varepsilon^{2}} \int_{\dist_{\tau_0}(x_0,y) \le \varepsilon } \left[\partial_\tau \dist^2_{\tau}(x_0,y)\right]  \;\dif\bal^\tau_{x,\eps} < \frac{2}{n+2} \scal_{g(\tau_0)}(x_0), 
\end{align*}
which by virtue of \eqref{eq:scal-asymp}, is a contradiction to the saturation condition. 
\end{enumerate}
\end{proof}
\section{Epilogue: Ricci flow of metric-measure spaces}	\label{sec:wrf}
\par As we have seen in the previous section, in the smooth setting, on a smooth compact manifold, a Ricci flow is nothing but a saturated super Ricci flow. Both being a super Ricci flow and being saturated were expressed in terms of metric and measure data (i.e. weakly). 
\par Smooth super Ricci flows, as we have observed, can be weakly formulated in terms of coupled contraction of dynamic diffusions (in the \textsf{virtually psc} case) or by the monotonicity of the pointwise Lipschitz constant under the heat flow. Both characterizations have weak formulations i.e. formulations that solely use metric and measure data. 
\par Building on this observation, we conclude these notes by proposing the following notions of weak Ricci flows for a time-dependent metric-measure space.   
\par Recall the upper and lower derivatives for a function $f:\R \to \R$
\[
\frac{\dif^-}{\dif t}\restr_{t = t_0} f := \varliminf_{t \to t_0} \frac{f(t) - f(t_0)}{t - t_0}, \quad \frac{\dif^+}{\dif t}\restr_{t = t_0} f := \varlimsup_{t \to t_0} \frac{f(t) - f(t_0)}{t - t_0}.
\]	

\begin{definition}[Core of a time-dependent metric-measure space]
	Let $\overrightarrow{\mathcal{X}} = \left(X, \dist_\tau, \m_\tau \right)$ be time dependent metric-measure space. A set $\mathsf{X} \subset X$ is called a core if it is a full-measure set for every $\m_\tau$. 
\end{definition}

\begin{definition}[Weak Ricci flow]
\par Let $\overrightarrow{\mathcal{X}} = \left(X, \dist_t, \m_t  \right)$ be given of compact time-dependent pseudo metric-measure spaces ($\dist_t$ are pseudo metrics) of Hausdorff dimension $n$. 
\par $\overrightarrow{\mathcal{X}}$ is called a weak Ricci flow if it is a \textsf{WSRF} and if it is saturated in the sense that
\begin{align*}
	\frac{12(n+2)}{n}\left(\frac{\mathcal{H}^{n} \left( B^{t}_{\varepsilon}(x) \right)}{\omega_n\varepsilon^n} -1\right) \le \smallO(\eps^2) + \int_{\dist_{\tau}(x,y) \le \varepsilon } \left[\partial^+_t \dist^2_{t}(x,y)\right]  \;\dif\bal^t_{x,\eps}.
\end{align*}
as $\eps \downarrow 0$ holds for every $x$ in a core $\mathsf{X}$ of $\mathcal{X}$. 
\end{definition}
\begin{definition}[Weak $\cost$-Ricci flow]
\par Let $\overleftarrow{\mathcal{X}} = \left(X, \dist_\tau, \m_\tau, \cost_\tau  \right)$ be a given compact \textsf{virtually psc} flow of pseudo metric-measure spaces equipped with time-dependent cost functions. 
\par $\overleftarrow{\mathcal{X}}$ is said to be a weak $\cost$-Ricci flow if it is a $\cost$-\textsf{WSRF} that is saturated in the sense that
\begin{align*}
	  \int_{\dist_{\tau}(x,y) \le \varepsilon } \left[\partial^-_\tau \dist^2_{\tau}(x,y)\right]  \;\dif\bal^\tau_{x,\eps} \ge \frac{12(n+2)}{n}\left(1 - \frac{\mathcal{H}^{n} \left( B^{\tau}_{\varepsilon}(x) \right)}{\omega_n\varepsilon^n}\right) + \smallO(\eps^2)
\end{align*} 
as $\eps \downarrow 0$ holds for all $x$ in a core $\mathsf{X}$ of $\mathcal{X}$. 
\end{definition}
\renewcommand{\baselinestretch}{1.2}
\vspace{10pt}

\begingroup
\begin{thebibliography}{depth}
\bibitem{ACT}
	M. Arnaudon and K. A. Coulibaly, and A. Thalmaier,
	\textit{Horizontal diffusion in $\mathcal{C}^1$ path space}, S\'eminaire de Probabilit\'es XLIII, Lecture Notes in Math., {\bf vol. 2006}, Springer, Berlin, 2011, pp. 73--94.
	\bibitem{B}
	R.H. Bamler,  
	\textit{Compactness theory of the space of super Ricci flows}, Invent. math. {\bf 233} (2023), 1121--1277.
	\bibitem{Cha}
	I. Chavel, 
	\textit{Eigenvalues in Riemannian geometry}, Pure and Applied Mathematics, {\bf vol 115}, Academic Press, 1984.
	\bibitem{EN}
	K. J. Engel and R. Nagel,
	\textit{One-parameter semigroups for linear evolution equations}, Springer, New York, NY, 2000. 
	\bibitem{EK}
	S. N. Ethier and T. G. Kurtz,
	\textit{Markov processes: characterization and convergence}, New York: Wiley, 1986.
\bibitem{Faris}
	W.G. Faris, 
	\textit{Product formulas for perturbations of linear propagators},
J. Funct. Anal. {\bf 1} (1967) 93--108.
\bibitem{Gr}
M. Gromov,
\textit{Four lectures on scalar curvature
},(IHES notes), arXiv:1908.10612, (2019).
\bibitem{H1}
	R. S. Hamilton, 
	\textit{Three-manifolds with positive Ricci curvature},
	J. Differential Geom. {\bf 17} (1982), no. 2, 255--306.
\bibitem{H2}
	\bysame,
	\textit{Formation of singularities in the Ricci flow}, Surveys in Diff. Geom. {\bf 2} (1995), 7--136.
\bibitem{H3}
	\bysame,
	\textit{A compactness property for solutions of the Ricci
	flow},
	Amer. J. Math. {\bf 117} (1995), 545--572.
	\bibitem{HN}
	R. Haslhofer and A. Naber, 
	\textit{Characterizations of the Ricci  flow}, 
J. Eur. Math. Soc. (JEMS) {\bf 20} (2018), no. 5, 1269--1302.
\bibitem{KL}
	B. Kleiner and J. Lott,
\textit{Singular Ricci flows I},
	Acta Math. {\bf 219}, no. 1 (2017), 65--134.
\bibitem{KL2}
\bysame,
\textit{Singular Ricci Flows II},
Geometric Analysis. Progress in Mathematics, {\bf vol 333}. Birkh\"auser, Cham, 2020.
\bibitem{KS}
	E. Kopfer and K. T. Sturm, 
	\textit{Heat ﬂow on time-dependent metric measure spaces and super-Ricci flows}, Comm. Pure Appl. Math. {\bf 71}, no. 12 (2018), 2500--2608.
\bibitem{LM1}
	S. Lakzian and M. Munn, 
	\textit{Metric perspectives of the Ricci flow applied to disjoint unions}, 
Anal. Geom. Metr. Spaces {\bf 2} (2014), 282--293.
\bibitem{LM2}
	\bysame, 
	\textit{On weak super Ricci flow through neckpinch},
	 Anal. Geom. Metr. Spaces {\bf 9} (2021), 120--159.
	\bibitem{Li}
	X.-D. Li, 
\textit{On Perelman’s W-entropy and Shannon entropy power for super Ricci flows on metric measure spaces}, https://arxiv.org/abs/2505.03202
\bibitem{MT}
	R. J. McCann and P. M. Topping,
	\textit{Ricci flow, entropy and optimal transportation}, 
Amer. J. Math. {\bf 132} no. 3 (2010), 711--730.
	\bibitem{Paz}
	A. Pazy,
	\textit{Semigroups of linear operators and applications to partial differential equations}, Springer, New York, NY, 1983. 
\bibitem{G1}
	G. Perelman, 
	\textit{The entropy formula for the Ricci flow and its geometric applications},
 arXiv:math/0211159 (2002).
\bibitem{G2}
\bysame,
	\textit{Finite extinction time for the solutions to the Ricci flow on certain three-manifolds}, arXiv:math/0307245 (2003).
	\bibitem{G3}
\bysame,
	\textit{Ricci flow with surgery on three-manifolds}, arXiv:math/0303109 (2003).
\bibitem{VS}
M. K. von Renesse and K. T. Sturm,
\textit{Transport inequalities, gradient estimates, entropy, and Ricci curvature}, 
Comm. Pure Appl. Math. {\bf 58} no. 7 (2005), 923--940.
	\bibitem{St1}
	K. T. Sturm,
	\textit{Super-Ricci flows for metric measure spaces}, 
J. Funct. Anal.  {\bf 275} no. 12 (2018), 3504--3569.
	\bibitem{Top2}
	P. Topping,
	\textit{Ricci flow: the foundations via optimal transportation},  Optimal Transportation, Theory and Applications, LMS lecture notes series, {\bf vol. 413}, CUP, 2014.
	\bibitem{V}
	P.-A. Vuillermot,
	\textit{A generalization of Chernoff’s product formula
		for time-dependent operators},
J. Funct. Anal. {\bf 259} (2010), 2923--2938.	
	


	 
\end{thebibliography}
\endgroup
\end{document}